\newtheorem*{rep@theorem}{\rep@title}
\newcommand{\newreptheorem}[2]{%
\newenvironment{rep#1}[1]{%
 \def\rep@title{#2 \ref{##1}}%
 \begin{rep@theorem}}%
 {\end{rep@theorem}}}
\newtheorem{thm}{Theorem}[section]
\newtheorem{thmx}{Theorem}
\newtheorem{corx}[thmx]{Corollary}
\newtheorem{lem}[thm]{Lemma}
\newtheorem{prop}[thm]{Proposition}
\newtheorem{cor}[thm]{Corollary}
\newtheorem*{thm*}{Theorem}
\newtheorem*{problem*}{Problem}
\newtheorem*{claim*}{Claim}
\theoremstyle{definition}
\newtheorem{defi}[thm]{Definition}
\newtheorem{exem}[thm]{Example}
\newtheorem{rem}[thm]{Remark}
\newtheorem*{conjecture}{Conjecture}
\newcommand{\claimmark}{\hfill$\lozenge$}
\newcommand{\mr}{\mathrm}
\newcommand{\norm}[1] {\| #1 \|}
\newcommand{\C}{\mathbb{C}}
\newcommand{\N}{\mathbb{N}}
\newcommand{\ra}{\rightarrow}
\renewcommand{\epsilon}{\varepsilon}
\renewcommand{\phi}{\varphi}
\renewcommand{\hat}{\widehat}
\renewcommand{\bar}{\overline}
\DeclareMathOperator{\supp}{supp}
\renewcommand{\Im}{\mr{Im}}
\newcommand{\Tube}{\mr{Tube}}
\newcommand{\ip}[2]{\langle {#1} , {#2} \rangle}
\DeclareFontFamily{U}{mathx}{\hyphenchar\font45}
\DeclareFontShape{U}{mathx}{m}{n}{
      <5> <6> <7> <8> <9> <10>
      <10.95> <12> <14.4> <17.28> <20.74> <24.88>
      mathx10
      }{}
\DeclareSymbolFont{mathx}{U}{mathx}{m}{n}
\DeclareMathAccent{\widecheck}{0}{mathx}{"71}
\DeclareMathAccent{\wideparen}{0}{mathx}{"75}
\numberwithin{equation}{section}
\begin{document}
\selectlanguage{english} 


\title[Exactness of Locally Compact Groups]{\texorpdfstring{Exactness of Locally Compact Groups}{Exactness of Locally Compact Groups}}

\author{Jacek Brodzki$^{1}$}
\address{School of Mathematics, University of Southampton,
\newline Southampton, SO17 1BJ, England}
\email{J.Brodzki@soton.ac.uk}

\author{Chris Cave$^{2}$}
\address{Department of Mathematical Sciences, University of Copenhagen,
\newline Universitetsparken 5, DK-2100 Copenhagen \O, Denmark}
\email{chris.cave@math.ku.dk}

\author{Kang Li$^{3}$}
\address{Department of Mathematical Sciences, University of Copenhagen,
\newline Universitetsparken 5, DK-2100 Copenhagen \O, Denmark}
\email{kang.li@math.ku.dk}


\thanks{{$^{1}$ $^{2}$} Supported by the EPSRC grant EP/I016945/1.}
\thanks{{$^{2}$ $^{3}$} Supported by the Danish National Research Foundation through the Centre for Symmetry and Deformation (DNRF92).}
\thanks{{$^{1}$} Supported by the EPSRC grant EP/N014189/1}
\thanks{{$^{3}$} Supported by the Danish Council for Independent Research (DFF-5051-00037).}

\begin{abstract}
We give some new characterizations of exactness for locally compact second countable groups. In particular, we prove that a locally compact second countable group is exact if and only if it admits a topologically amenable action on a compact Hausdorff space. This answers an open question by Anantharaman-Delaroche.
\end{abstract}

\maketitle
\parskip 4pt

 \section{Introduction}
 In their study of the  continuity of fibrewise reduced crossed product $C^*$-bundles
Kirchberg and Wassermann  \cite{MR1721796}  introduced a new class of groups,  called exact groups,  defined by  the following property.  We say that  a locally compact group $G$ is \emph{exact} if the operation of taking reduced crossed products by $G$ preserves  short exact sequences of $G$-$C^*$-algebras.

It is an immediate consequence of this definition (and more details will be given later) that the reduced group $C^*$-algebra $C^*_r(G)$ of an exact group $G$ is an exact $C^*$-algebra in the sense that  the operation of taking the minimal tensor product with $C^*_r(G)$ preserves short exact sequences of $C^*$-algebras. In the same paper, Kirchberg and Wassermann proved the converse assertion for all discrete groups \cite[Theorem 5.2]{MR1721796}. Thus, a discrete group $G$ is exact if and only if its reduced $C^*$-algebra $C_r^*(G)$ is an exact $C^*$-algebra. This result sparked a lot of interest focused on identifying properties of discrete groups that are equivalent to exactness. It was proved by Anantharaman-Delaroche \cite[Theorem 5.8, Theorem 7.2]{MR1926869} and independently by Ozawa \cite[Theorem 3]{MR1763912} that for a discrete group $G$, $C_r^*(G)$ is exact if and only if the uniform Roe algebra $C^*_u(G)$ is nuclear if and only if $G$ acts amenably on a compact Hausdorff space. The latter property is called amenability at infinity.

It is a natural question if a similar relation between exactness and amenability at infinity can be established for general locally compact groups. Here, however, the situation is  more complicated. 
Topological amenability of actions of locally compact groups were studied in depth by  Anantharaman-Delaroche in \cite{MR1926869}. In particular, she proved that a locally compact group is exact whenever it is amenable at infinity \cite[Theorem 7.2]{MR1926869}. 
In the same article Anantharaman-Delaroche also proved that if $G$ is a locally compact group with property (W) such that $C_r^*(G)$ is exact, then $G$ is amenable at infinity \cite[Theorem 7.3]{MR1926869}. Property (W) is weaker than Paterson's inner amenability \cite[\S 2.35]{MR961261}. In particular, every discrete group has property (W). Anantharaman-Delaroche further observed  that property (W) is equivalent to amenability for almost connected groups. 
However, since every almost connected group is amenable at infinity  \cite[Proposition 3.3]{MR1926869}, we see that locally compact groups can be amenable at infinity without having property (W). Based on these facts, Anantharaman-Delaroche left open the following problem: 
\begin{problem*}[{{\cite[Problem 9.3]{MR1926869}}}]
For any locally compact group $G$, does the exactness of $G$ imply that $G$ is amenable at infinity?
\end{problem*}

In this paper, we settle this  problem for the class of locally compact second countable groups:

\begin{thmx}[see Theorem \ref{thm:amenable at infinity and exactness}] \label{thm:intro main theorem}
  Let $G$ be a locally compact second countable group. Then the following conditions are equivalent:
  \begin{enumerate}
   \item $G$ is amenable at infinity.
   \item $G$ is exact.
   \item \label{thm: exact sequence}The sequence
   \[
    0 \to C_0(G) \rtimes_{L,r} G \to C_b^{lu}(G) \rtimes_{L,r} G \to C_b^{lu}(G) / C_0(G) \rtimes_{L,r} G \to 0
   \]
   is exact.
   \item \label{thm: maximal and reduced}$C_b^{lu}(G) / C_0(G)\rtimes_{L}G\cong C_b^{lu}(G) / C_0(G)\rtimes_{L,r}G$ canonically.
   \item $C_b^{lu}(G) \rtimes_{L,r} G$ is nuclear.
  \end{enumerate}
 \end{thmx}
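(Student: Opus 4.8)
\medskip
\noindent\textit{Proof strategy.}
The implication $(1)\Rightarrow(2)$ is already available --- it is the theorem of Anantharaman-Delaroche, recalled in the introduction, that amenability at infinity implies exactness --- so I would simply quote it. The plan for the remaining implications is to funnel all five conditions through a single topological dynamical system. Let $\Omega$ be the Gelfand spectrum of the unital commutative $C^*$-algebra $C_b^{lu}(G)$; it is a compact Hausdorff space carrying the $G$-action dual to $L$. Since $C_0(G)$ is a $G$-invariant ideal of $C_b^{lu}(G)$, it corresponds to an open invariant subset of $\Omega$ that is $G$-equivariantly homeomorphic to $G$ with the translation action, and the closed complement $\partial\Omega$ satisfies $C(\partial\Omega)\cong C_b^{lu}(G)/C_0(G)$. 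The basic dictionary I would establish first is
\[
G\text{ is amenable at infinity}\iff G\acts\Omega\text{ is topologically amenable}\iff G\acts\partial\Omega\text{ is topologically amenable}.
\]
In the first equivalence one direction is immediate since $\Omega$ is compact Hausdorff; for the converse, if $G\acts Y$ is topologically amenable with $Y$ compact, then for any $y_0\in Y$ pulling functions back along the orbit map $g\mapsto gy_0$ defines a $G$-equivariant unital $*$-homomorphism $C(Y)\to C_b^{lu}(G)$ (it lands in $C_b^{lu}(G)$ because the $G$-action on $C(Y)$ is norm-continuous), and dually a $G$-equivariant continuous map $\Omega\to Y$, along which topological amenability pulls back. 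For the second equivalence, the translation action $G\acts G$ is proper, hence topologically amenable, so the amenability of $G\acts\Omega$ is controlled entirely by the closed invariant piece $\partial\Omega$.

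\medskip
Granting the dictionary, the implications out of $(1)$ are routine. If $G\acts\Omega$ is topologically amenable, then $C_b^{lu}(G)\rtimes_{L,r}G=C(\Omega)\rtimes_{L,r}G$ is nuclear, because a topologically amenable action with nuclear (here commutative) coefficient algebra has nuclear reduced crossed product; this is $(5)$. For an amenable action the full and reduced crossed products coincide, which applied to $C(\partial\Omega)$ gives $(4)$ and, together with the exactness of the \emph{full} crossed-product functor and the identification $C_0(G)\rtimes_L G\cong C_0(G)\rtimes_{L,r}G\cong\mathcal{K}(L^2(G))$ (the translation action on $C_0(G)$ being proper), yields the exact sequence $(3)$; in any case $(2)\Rightarrow(3)$ is just the definition of exactness applied to $0\to C_0(G)\to C_b^{lu}(G)\to C_b^{lu}(G)/C_0(G)\to0$. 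I would also record the purely formal implication $(4)\Rightarrow(3)$: comparing the always-exact full-crossed-product sequence with the reduced one --- whose left-hand terms agree via the isomorphism above --- hypothesis $(4)$ makes the right-hand comparison map an isomorphism, and a short diagram chase forces the middle one to be an isomorphism as well, which is $(3)$.

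\medskip
It remains to close the circle by deducing $(1)$ from the exactness-type hypotheses; by the dictionary it suffices, for each of $(2),(3),(4),(5)$, to exhibit the topological amenability of $G\acts\partial\Omega$. The paradigm case is the assertion that \emph{if $C_b^{lu}(G)\rtimes_{L,r}G$ is nuclear (equivalently exact), or if $G$ is exact, then $G\acts\partial\Omega$ is topologically amenable}; the remaining conditions I would reduce to this one by the (more technical, but essentially standard) arguments that exploit the co-universal role of $C_b^{lu}(G)$ among commutative $G$-$C^*$-algebras. This paradigm statement is the locally compact analogue of the Anantharaman-Delaroche/Ozawa theorem for discrete groups (exactness $\Leftrightarrow$ amenability of the $\beta G$-action): one must convert a $C^*$-algebraic finiteness property --- the completely positive approximations provided by nuclearity of the reduced crossed product, or the defining exactness of $G$ --- into an approximately $G$-invariant system of compactly supported probability measures on $G$, and then push it out to the corona $\partial\Omega$ along the canonical dense embedding $G\hookrightarrow\Omega$.

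\medskip
The genuinely new difficulty --- and the reason Anantharaman-Delaroche needed property (W) --- is that the ``continuous directions'' of $G$ must be controlled \emph{uniformly}; this is where I would use second countability. By Struble's theorem $G$ admits a proper left-invariant metric $d$, with respect to which $C_b^{lu}(G)\rtimes_{L,r}G$ is (a copy of) the uniform Roe algebra of the metric space $(G,d)$. Choosing a maximal $1$-separated subset $\Gamma\subseteq G$ produces a uniformly discrete space of bounded geometry --- bounded geometry because balls in $(G,d)$ have uniformly bounded Haar measure --- that is coarsely equivalent to $G$; consequently $C^*_u(\Gamma)$ and $C_b^{lu}(G)\rtimes_{L,r}G$ are stably isomorphic, so nuclearity (equivalently exactness) passes to $C^*_u(\Gamma)$. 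The discrete bounded-geometry theory then yields property A for $\Gamma$, hence for $G$ by coarse invariance, and property A of $(G,d)$ unwinds --- via the approximate invariance and finite propagation of the associated kernels, together with the left-invariance of $d$ --- into precisely the approximately $G$-invariant family of probability measures that witnesses topological amenability of $G\acts\partial\Omega$. Second countability is used both in Struble's theorem and in the separability needed to turn completely positive approximations into convergent nets of measures; this non-discrete-to-discrete reduction, carried out while keeping the $G$-action in view throughout, is the step I expect to be the main obstacle. Once it is in place the circle closes: $(1)$ implies each of $(2),(3),(4),(5)$ by the first two paragraphs, and each of them forces $G\acts\partial\Omega$ to be topologically amenable, hence $(1)$.
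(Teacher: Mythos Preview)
Your outline handles the implications out of (1) and the formal reductions $(2)\Rightarrow(3)$, $(4)\Rightarrow(3)$ correctly, and passing to a discrete metric lattice $\Gamma\subseteq G$ via Struble's theorem is exactly what the paper does. There are, however, two genuine problems.

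The decisive gap is $(3)\Rightarrow(1)$. You propose to reduce (3) to the ``paradigm'' hypotheses (2) or (5) via ``co-universality of $C_b^{lu}(G)$'', but no such reduction is available: exactness of a \emph{single} short exact sequence of reduced crossed products does not, by any standard argument, imply exactness of $G$ or nuclearity of $C_b^{lu}(G)\rtimes_{L,r}G$. The paper's route is completely different and is in fact the main contribution. If $G$ is not amenable at infinity then the lattice $\Gamma$ fails Yu's property~A, so by the Roe--Willett theorem $C_u^*(\Gamma)$ contains a non-compact \emph{ghost} operator $T$. One then constructs by hand an embedding $\Phi:C_u^*(\Gamma)\hookrightarrow C_b^{ru}(G)\rtimes_{R,r}G\cong C_b^{lu}(G)\rtimes_{L,r}G$ (via an isometry $W:\ell^2(\Gamma)\to L^2(G)$ built from translates of a bump function) and verifies, using the slice-map characterisation of $\ker q_r$, that $\Phi$ sends the ghost ideal into $\ker q_r$ while mapping $K(\ell^2(\Gamma))$ exactly onto $\Phi(C_u^*(\Gamma))\cap(C_0(G)\rtimes_{r}G)$. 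Hence $\Phi(T)\in\ker q_r\setminus\operatorname{Im}\iota_r$, witnessing failure of~(3). Ghost operators appear nowhere in your proposal; without them $(3)\Rightarrow(1)$ --- and therefore also $(2)\Rightarrow(1)$, since your only path out of (2) is through (3) --- has no argument.

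A secondary issue concerns $(5)\Rightarrow(1)$. Your claim that $C_u^*(\Gamma)$ and $C_b^{lu}(G)\rtimes_{L,r}G$ are stably isomorphic because $\Gamma$ and $G$ are coarsely equivalent is not justified: Morita equivalence of uniform Roe algebras under coarse equivalence was only known \emph{assuming} property~A (\v{S}pakula--Willett), so invoking it here is circular. The paper instead uses the same explicit $\Phi$ together with the c.c.p.\ map $E=W^*(M\rtimes_r\rho)(\cdot)W$ satisfying $E\circ\Phi=\mathrm{Id}_{C_u^*(\Gamma)}$; this factorisation of the identity through a nuclear algebra by c.c.p.\ maps forces $C_u^*(\Gamma)$ to be nuclear, after which the discrete theory gives property~A for $\Gamma$ and hence for $G$. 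Your strategy for $(5)\Rightarrow(1)$ is thus right in spirit, but needs these explicit maps rather than an appeal to stable isomorphism.
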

 In the class of discrete countable groups the picture is more complete. Kirchberg and Wassermann proved that for a discrete countable group $G$, exactness of the reduced group $C^*$-algebra $C^*_r(G)$ is equivalent to $G$ being exact \cite[Theorem 5.2]{MR1721796}. 
 
 In the class of discrete groups Anantharaman-Delaroche proved that exactness of $C^*_r(G)$ is equivalent to $G$ being amenable at infinity \cite[Theorem 7.3]{MR1926869} and the nuclearity of the uniform Roe algebra $\ell^{\infty}(G) \rtimes_r G$ \cite[Theorem 5.8]{MR1926869}. This was also proved independently by Ozawa \cite[Theorem 3]{MR1763912} (see also the work of Guentner and Kaminker in \cite{Guentner2002, MR1876896}). In fact in \cite{MR1926869} this equivalence was proved to hold in the class of locally compact groups with property (W).
 
 Higson and Roe in \cite[Theorem 3.3]{MR1739727} proved that Yu's property $A$ for a discrete countable group $G$, introduced in \cite[Definition 2.1]{MR1728880}, is equivalent to $G$ being amenable at infinity.  More recently, Roe and Willett proved that a discrete metric space with bounded geometry has property $A$ if and only if every ghost operator is compact \cite[Theorem 1.3]{MR3146831}. As a consequence, a discrete countable group $G$ is amenable at infinity if and only if the sequence in \eqref{thm: exact sequence} in Theorem \ref{thm:intro main theorem} is exact \cite[Corollary 5.1]{MR3146831}. We are unsure if the equivalence of \eqref{thm: maximal and reduced} in Theorem \ref{thm:intro main theorem} with the other properties in the discrete case has been written down explicitly but it is known to experts and is an easy consequence of \cite[Corollary 5.1]{MR3146831}.
  
In order to prove this main theorem, we will work within the framework of coarse geometry. This is because that it has long been known that the metric space underlying a finitely generated discrete group has property $A$ if and only if the group is amenable at infinity \cite[Theorem 3.3]{MR1739727}. Property $A$ was first introduced in \cite[Definition 2.1]{MR1728880} by Yu on discrete metric spaces and Roe generalized property $A$ to proper metric spaces in \cite[Definition 2.1]{MR2115671}. Recently, the third named author together with Deprez proved in \cite[Corollary 2.9]{MR3420532} that a locally compact second countable group $G$ is amenable at infinity if and only if the metric space $(G,d)$ has Roe's property $A$ with respect to any proper left-invariant metric $d$ that implements the topology on $G$. It was already observed by Roe in \cite[Lemma 2.2]{MR2115671} that if $(G,d)$ does not have Roe's property $A$, then there exists a discrete metric subspace $Z$ without Yu's property $A$. It follows from a recent result in \cite[Theorem 1.3]{MR3146831} by Roe and Willett that there exists a non-compact ghost operator $T$ in the uniform Roe algebra $C_u^*(Z)$ of $Z$. Up to isomorphism, we provide in Propositions \ref{mainprop} and \ref{mainprop2} an embedding $\Phi: C_u^*(Z) \ra C_b^{lu}(G) \rtimes_{L,r} G$ such that $\Phi(T)$ becomes an obstruction to the exactness of the sequence in Theorem \ref{thm:intro main theorem}. This   proves the implication $(3)\Rightarrow(1)$ in the theorem and answers the open question raised by Anantharaman-Delaroche.

Kirchberg and Wassermann in \cite[Section 6]{MR1721796} remarked that they were unable to adapt their proof to show that an arbitrary nondiscrete group with exact reduced group $C^*$-algebra is exact. Anantharaman-Delaroche posed this as a question in \cite[Problem 9.3]{MR1926869} for the class of locally compact groups. As far as we can tell, Theorem \ref{thm:intro main theorem} has no bearing on this problem.
 
 Another open question is the following: if $G$ acts on a compact Hausdorff space $X$ and one knows $C(X) \rtimes G = C(X) \rtimes_r G$, does the reduced crossed product functor $-\rtimes_r G$ preserve short exact sequences? As far as we know, this question is open even in the discrete case.

It is well-known that every locally compact second countable group $G$ admits a proper left-invariant metric $d$ that generates the topology on $G$ \cite{MR0348037} \cite[Theorem 4.5]{HP}. Moreover, property $A$ on locally compact second countable groups is a coarsely invariant property and it is independent of the choices of the proper left-invariant metric $d$ (see Section 2 below). As a corollary of Theorem \ref{thm:intro main theorem} and \cite[Corollary 2.9]{MR3420532}, we obtain the following:
 \begin{corx}
 If $G$ and $H$ are coarsely equivalent locally compact second countable groups, then $G$ is exact if and only if $H$ is exact. 
 \end{corx}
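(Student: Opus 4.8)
The plan is to deduce the corollary directly from Theorem~\ref{thm:intro main theorem} together with the coarse-geometric description of amenability at infinity, so the proof is a short chain of equivalences with no new content beyond what is collected in Section~2.

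First I would fix proper left-invariant metrics $d_G$ on $G$ and $d_H$ on $H$ generating the respective topologies; such metrics exist by \cite{MR0348037} and \cite[Theorem 4.5]{HP}. The key preliminary point, recorded in Section~2, is that any two proper left-invariant metrics on a given locally compact second countable group are coarsely equivalent, so the coarse type of the group, and in particular whether it has Roe's property $A$, does not depend on the chosen metric. Consequently the hypothesis that $G$ and $H$ are coarsely equivalent means precisely that the metric spaces $(G,d_G)$ and $(H,d_H)$ are coarsely equivalent as proper metric spaces.

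Next I would invoke the coarse invariance of Roe's property $A$ for proper metric spaces, again from Section~2: if two proper metric spaces are coarsely equivalent, then one has property $A$ exactly when the other does. Applied to $(G,d_G)$ and $(H,d_H)$, this shows that $(G,d_G)$ has property $A$ if and only if $(H,d_H)$ has property $A$.

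Finally I would chain the equivalences. By \cite[Corollary 2.9]{MR3420532}, a locally compact second countable group is amenable at infinity if and only if its underlying metric space has Roe's property $A$; and by the equivalence $(1)\Leftrightarrow(2)$ of Theorem~\ref{thm:intro main theorem}, a locally compact second countable group is amenable at infinity if and only if it is exact. Combining these gives
\[
G \text{ exact} \iff (G,d_G)\text{ has property }A \iff (H,d_H)\text{ has property }A \iff H \text{ exact},
\]
which is the assertion. The only step requiring any care is the translation in the first paragraph between coarse equivalence of the groups and coarse equivalence of their underlying metric spaces, and this is handled entirely by the independence-of-metric facts collected in Section~2; everything else is purely formal.
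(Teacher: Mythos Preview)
Your argument is correct and is exactly the approach the paper takes: the corollary is stated immediately after noting that property $A$ is a coarse invariant independent of the chosen metric, and is deduced from Theorem~\ref{thm:intro main theorem} together with \cite[Corollary~2.9]{MR3420532}. You have simply written out the chain of equivalences that the paper leaves implicit.
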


Another interesting consequence of the main theorem relates to the well-known result by Higson  \cite[Theorem 1.1]{MR1779613} that a countable discrete group, which is amenable at infinity, satisfies the strong Novikov conjecture:
 
\begin{conjecture}[The Strong Novikov Conjecture]
Let $G$ be a locally compact second countable group. The Baum--Connes assembly map $$\mu_A:K_*^{\text{top}}(G,A)\rightarrow K_*(A\rtimes_r G)$$ is split-injective for every separable $G$-$C^*$-algebra $A$.
\end{conjecture}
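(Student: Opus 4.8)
To prove the conjecture for an \emph{arbitrary} locally compact second countable group $G$, the plan is to funnel split-injectivity of $\mu_A$ through Kasparov's Dirac--dual-Dirac method, which reduces the problem to the construction of a single equivariant $\mathrm{KK}$-class. Recall that $\mu_A$ can be realised as the descent, followed by Kasparov product with $A$, of a \emph{Dirac morphism} $D \in \mathrm{KK}^G(\mc P, \C)$, where $\mc P$ is a proper $G$-$C^*$-algebra modelling the universal example for proper actions and $K_*^{\mr{top}}(G,A)$ is computed on the $\mc P$-side. One then seeks a \emph{dual-Dirac morphism} $\eta \in \mathrm{KK}^G(\C, \mc P)$ that is a one-sided inverse to $D$; its Kasparov product with $D$ yields a $\gamma$-element, and a standard descent argument turns such an $\eta$ into a left inverse of $\mu_A$ \emph{simultaneously} for every separable coefficient algebra $A$. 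Thus the entire conjecture is funnelled into the single geometric task of producing the class $\eta$ for a general $G$, and the first thing I would do is set up this reduction carefully in the locally compact second countable setting.

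For the exact members of the class this task can be carried out by coarse-geometric means, extending Higson's theorem \cite{MR1779613} from discrete groups to the locally compact second countable setting. Concretely, I would fix a proper left-invariant metric $d$ generating the topology on $G$ and invoke Theorem \ref{thm:intro main theorem} together with \cite{MR3420532} to replace exactness by amenability at infinity, equivalently by Roe's property $A$ for $(G,d)$. Property $A$ supplies, through the positive-type kernels on $(G,d)$ that it guarantees, both a candidate dual-Dirac class $\eta$ and a proof that the coarse assembly map for $(G,d)$ is split-injective; a comparison and descent between the coarse assembly map and $\mu_A$ would then transport split-injectivity up to the group level for all separable $A$. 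This is precisely the route that the machinery of the present paper makes available, and it disposes of every exact $G$.

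The hard part --- and the reason the statement is still a conjecture rather than a theorem --- is exactly the complement: groups that are \emph{not} exact. For such $G$, Theorem \ref{thm:intro main theorem} shows there is no topologically amenable action on any compact Hausdorff space and no property $A$, so the positive-type/corona construction of $\eta$ simply collapses and no coarse substitute is available. One is thrown back on manufacturing a $\gamma$-element from some other geometric input --- a proper isometric action on a Hilbert space, a proper action on a bolic or $\mathrm{CAT}(0)$ space, a linear or hyperbolic structure, and so on --- but none of these need be present for a general $G$, and there is at present no construction of a dual-Dirac morphism that applies uniformly across all locally compact second countable groups. I therefore expect the realistic deliverable to be the exact case of paragraph two, recorded as a corollary of Theorem \ref{thm:intro main theorem} and the generalised Higson theorem, with the non-exact case isolated as the genuine, and currently open, obstacle to the conjecture as stated.
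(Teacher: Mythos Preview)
The statement you were handed is labelled a \emph{conjecture} in the paper, and the paper makes no attempt to prove it in full generality; there is no ``paper's own proof'' to compare against. What the paper actually does is precisely what you anticipate in your final paragraph: it records the exact case as Corollary~B, and the argument there is a one-line citation of Chabert--Echterhoff--Oyono-Oyono \cite{MR2100669}, who established the strong Novikov conjecture for every locally compact second countable group that is amenable at infinity. Theorem~\ref{thm:intro main theorem} converts exactness into amenability at infinity, and that is the whole argument.

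Your assessment is therefore correct. The Dirac--dual-Dirac reduction and the coarse-geometric construction of a dual-Dirac class you sketch in the first two paragraphs are a faithful outline of what underlies \cite{MR2100669} and its antecedents \cite{MR1779613, MR1905840}, though the paper does not unpack any of that machinery --- it simply invokes the result as a black box. Your third paragraph is the honest statement of the situation: for non-exact $G$ there is no known way to manufacture a $\gamma$-element, property~$A$ and amenability at infinity both fail by Theorem~\ref{thm:intro main theorem}, and the conjecture remains open. There is no gap in your reasoning; you have been asked to prove an open problem and have correctly diagnosed it as such, while isolating the exact case that the paper's results genuinely deliver.
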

The strong Novikov conjecture for countable discrete groups implies the Novikov conjecture on homotopy invariance of higher signatures \cite[Theorem 7.11]{MR1292018}. Later on, Chabert, Echterhoff and Oyono-Oyono were able to show that the strong Novikov conjecture is still true for locally compact second countable groups that are amenable at infinity \cite[Theorem 1.9]{MR2100669}. Hence, our main theorem provides the following corollary.
  \begin{corx}
 If $G$ is a locally compact second countable exact group, then the strong Novikov conjecture holds for $G$, i.e. the Baum--Connes assembly map $$\mu_A:K_*^{\text{top}}(G,A)\rightarrow K_*(A\rtimes_r G)$$ is split-injective for every separable $G$-$C^*$-algebra $A$.
 \end{corx}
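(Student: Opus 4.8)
The plan is to obtain this as a formal consequence of the main theorem combined with existing results on the Baum--Connes assembly map. First I would apply Theorem \ref{thm:amenable at infinity and exactness} (that is, Theorem \ref{thm:intro main theorem}): since $G$ is a locally compact second countable group which is exact, it is amenable at infinity, i.e.\ it admits a topologically amenable action on a compact Hausdorff space.

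Having secured amenability at infinity, I would then invoke the theorem of Chabert, Echterhoff and Oyono-Oyono \cite[Theorem 1.9]{MR2100669}, which asserts precisely that for a locally compact second countable group $G$ that is amenable at infinity, the Baum--Connes assembly map $\mu_A \colon K_*^{\text{top}}(G,A) \ra K_*(A \rtimes_r G)$ is split-injective for every separable $G$-$C^*$-algebra $A$. (This is the locally compact generalization of Higson's descent argument \cite[Theorem 1.1]{MR1779613} for countable discrete groups.) Chaining the two implications—exactness $\Rightarrow$ amenability at infinity $\Rightarrow$ split-injectivity—yields the corollary.

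The only point requiring a moment's care is to confirm that the formulation of ``amenable at infinity'' used in \cite{MR2100669} agrees with the one adopted here, namely topological amenability of an action on a compact Hausdorff space in the sense of Anantharaman-Delaroche \cite{MR1926869}; this is indeed the case. Beyond that there is no genuine obstacle: all of the work is contained in the main theorem, and once exactness has been identified with amenability at infinity the strong Novikov conjecture follows with no further effort.
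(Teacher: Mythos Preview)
Your proposal is correct and matches the paper's own argument exactly: the corollary is deduced immediately from the main theorem (exactness implies amenability at infinity) together with \cite[Theorem 1.9]{MR2100669}. The paper gives no further details beyond this chaining of implications, so there is nothing to add.
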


Guentner and Kaminker in \cite[Theorem 3.1]{MR1876896} \cite[Theorem 2]{Guentner2002} and Ozawa in \cite[Theorem 3]{MR1763912} observed that a finitely generated exact group is coarsely embeddable into a Hilbert space. This result was also proved independently by Yu in \cite[Theorem 2.2]{MR1728880}. Gromov's construction \cite{MR1978492, AD} and Osajda's recent construction \cite[Corollary 3.3]{Osa} provide finitely generated groups that are not coarsely embeddable into a Hilbert space. Hence, these groups can not be exact. Osajda also constructed in \cite[Theorem 6.2]{Osa} the first example of a finitely generated non-exact group that is coarsely embeddable into a Hilbert space. However,
the strong Novikov conjecture is known to hold for all discrete groups which admit a coarse embedding into a Hilbert space \cite[Corollary 1.2]{MR1728880} \cite[Theorem 5.5]{MR1905840}. Recently, Deprez and the third named author were able to generalize this result from discrete groups to arbitrary locally compact second countable groups with the same property \cite[Corollary 2.10]{MR3420532}.


We end this introduction with a completely bounded Schur multiplier characterization for locally compact exact groups. The following theorem extends \cite[Theorem~6.1]{MR2945214}, which proved the same statement for discrete groups. However, the proof of \cite[Theorem~6.1]{MR2945214} relies heavily on the discreteness of the groups. For instance, it used the fact from \cite[Theorem 5.2]{MR1721796} that if a reduced discrete group $C^*$-algebra is exact, then the discrete group itself is exact. 

 \begin{thmx}[see Theorem \ref{wa-a}]\label{cb_exact}
Let $G$ be a locally compact second countable group. The following conditions are equivalent:
\begin{itemize}
\item[(1)] The group $G$ is exact.
\item[(2)] There is a constant $C>0$ such that for any compact subset $K\subseteq G$ and $\epsilon>0$, there exist a compact subset $L\subseteq G$ and a (continuous) Schur multiplier $k:G\times G\rightarrow \C$ with $\norm{k}_S\leq C$ such that $\supp k\subseteq \Tube(L)$ and 
\[
\sup\{|k(s,t)-1|: (s,t)\in \Tube(K)\} <\epsilon.
\]
\end{itemize}
 \end{thmx}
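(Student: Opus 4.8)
The plan is to establish the equivalence by passing through the coarse-geometric characterization of exactness already available from Theorem~\ref{thm:intro main theorem} together with \cite[Corollary 2.9]{MR3420532}, namely that $G$ is exact if and only if the metric space $(G,d)$ has Roe's property $A$ for a proper left-invariant metric $d$ generating the topology. So the real content is to show that property $A$ of $(G,d)$ is equivalent to the Schur multiplier condition (2). Here $\Tube(K)$ should be read as $\{(s,t)\in G\times G : s^{-1}t\in K\}$ (equivalently a tube of bounded width in the metric $d$), and $\norm{k}_S$ is the Schur (completely bounded) multiplier norm of the kernel $k$ acting on $B(L^2(G))$ or on the Roe-type algebras associated to $(G,d)$.

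First I would recall the standard positive-type/partition-of-unity formulation of property $A$ for a bounded geometry — here, coarsely-proper — metric space: $(G,d)$ has property $A$ iff for every $R>0$ and $\epsilon>0$ there is a map $\xi : G \to \ell^2(G)$ (or into $L^2$ of a suitable space) with $\norm{\xi_s}=1$, with $\la \xi_s,\xi_t\ra$ uniformly close to $1$ when $d(s,t)\le R$, and with $\la \xi_s,\xi_t\ra = 0$ when $d(s,t)$ exceeds some $S=S(R,\epsilon)$. For the implication (1)$\Rightarrow$(2): given such a family, set $k(s,t) = \la \xi_s,\xi_t\ra$. This is automatically a Schur multiplier of norm at most $1$ (it factors through the Hilbert space, by the Haagerup characterization of Schur multipliers as those kernels of the form $\la f(t),g(s)\ra$ with $\sup\norm{f(t)}\sup\norm{g(s)}<\infty$), its support lies in the tube of width $S$, hence in $\Tube(L)$ for a suitable compact $L$ (translating metric balls to compact sets via properness of $d$), and $|k(s,t)-1|<\epsilon$ on $\Tube(K)$ once $R$ is chosen so that $s^{-1}t\in K$ forces $d(s,t)\le R$. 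One must check continuity of $k$; this follows by choosing $\xi$ to vary continuously, which one can arrange since $G$ is second countable and the metric is compatible with the topology — or alternatively by a routine mollification/averaging argument against a compactly supported approximate identity on $G$, which preserves the Schur norm bound and the (slightly enlarged) support condition while not disturbing the estimate near $\Tube(K)$ by more than a controlled amount.

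For the converse (2)$\Rightarrow$(1): given a Schur multiplier $k$ with $\norm{k}_S\le C$, $\supp k\subseteq \Tube(L)$, and $\sup_{\Tube(K)}|k(s,t)-1|<\epsilon$, I would first symmetrize (replace $k$ by $\tfrac12(k(s,t)+\overline{k(t,s)})$, or by $k(s,t)\overline{k(s,t)}$ after normalizing) and then invoke the Haagerup–Effros–Ruan type factorization: a kernel of Schur norm $\le C$ decomposes as $k(s,t)=\la f(t),g(s)\ra$ with $\sup_s\norm{g(s)}\,\sup_t\norm{f(t)}\le C$. To produce the unit-vector positive-type family required by property $A$, one passes to $k'(s,t) := \la g(s),g(t)\ra / $ (something) after a standard trick turning a bounded factorization into a positive-definite one of controlled norm — e.g. replacing $k$ by $k^* k$ in the Schur algebra, which is positive-definite, has Schur norm $\le C^2$, still is supported in a (doubled-width) tube over a compact set, and still is uniformly close to $1$ on $\Tube(K)$ after shrinking $\epsilon$. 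Normalizing the diagonal then yields exactly a property-$A$ family with width controlled by the compact set $L$ (hence by $d$), completing the equivalence by \cite[Corollary 2.9]{MR3420532}.

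The main obstacle I anticipate is the passage between the completely bounded Schur norm $\le C$ and the \emph{positive}-type family with norm/diagonal \emph{exactly} $1$ that property $A$ demands: going from a merely bounded factorization to a genuine state-like kernel requires the standard but slightly delicate argument (used already in the discrete case of \cite[Theorem~6.1]{MR2945214}) that trades the constant $C$ for a power of $C$ and iterates the approximation, and one must verify that all the support conditions survive under composition of kernels in the tube structure — i.e. that $\Tube(L)\cdot\Tube(L)\subseteq\Tube(L\cdot L^{-1}\cdot L)$ or similar, with $L\cdot L^{-1}\cdot L$ still compact, so that "supported on a tube over a compact set" is stable under the manipulations. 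The second, more technical point is the continuity of the Schur multiplier in the locally compact (non-discrete) setting: one has to be careful that the convolution/mollification used to upgrade measurable $\xi$ to continuous $\xi$ (or measurable $k$ to continuous $k$) does not destroy the Schur norm bound, which is where we genuinely use the replacement of the discreteness argument of \cite{MR2945214} by the coarse-geometric input of Theorem~\ref{thm:intro main theorem} and the metric results of \cite{HP, MR3420532}.
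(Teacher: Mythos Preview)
Your direction $(1)\Rightarrow(2)$ is fine and matches the paper: property $A$ for $(G,d)$ gives a normalized positive-type kernel, hence a Schur multiplier with $C=1$, and this is exactly what the paper extracts from \cite[Theorem~2.3]{MR3420532}.

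The gap is in $(2)\Rightarrow(1)$. Your proposed passage from a bounded Schur multiplier to a positive-definite kernel does not work as written. In the Schur-multiplier algebra the product is the \emph{pointwise} product of kernels (so that $m_{k_1k_2}=m_{k_1}\circ m_{k_2}$), and under that product $k^*k$ means $(s,t)\mapsto \overline{k(t,s)}\,k(s,t)$; this does have Schur norm $\le C^2$ and the right support and approximation properties, but it is \emph{not} positive-definite in general. If instead you mean the integral composition $(s,t)\mapsto \int \overline{k(u,s)}k(u,t)\,d\mu(u)$, that \emph{is} positive-definite and does live in a doubled tube, but it is neither a Schur multiplier of norm $\le C^2$ nor close to $1$ on $\Tube(K)$ (its diagonal is $\int|k(u,s)|^2\,d\mu(u)$, not $1$). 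Likewise the kernel $\langle g(s),g(t)\rangle$ coming from the Grothendieck factorization is positive-definite but has no reason to vanish outside a tube or to be close to $1$ on $\Tube(K)$. The reference to \cite[Theorem~6.1]{MR2945214} does not help: as the paper notes, that discrete argument passes through the implication ``$C^*_r(G)$ exact $\Rightarrow$ $G$ exact'', not through any kernel manipulation of the kind you describe.

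The paper avoids this obstruction entirely. For $(2)\Rightarrow(1)$ it restricts the Schur multipliers $k$ to a uniformly locally finite metric lattice $Z\subset G$ and observes that the associated Schur multiplication operators $m_{k_n}$ on $B(\ell^2(Z))$ are uniformly bounded, converge to the identity on finite-propagation operators, and map finite-propagation ghosts to compact operators; hence every ghost in $C^*_u(Z)$ is compact (Proposition~\ref{wa-ghost-compact}). By the Roe--Willett theorem (Theorem~\ref{ghost-A}) this forces $Z$ to have Yu's property $A$, and then Proposition~\ref{metriclattice-propertyA} lifts property $A$ to $G$. No conversion from Schur multipliers to positive-definite kernels is needed; the operator-algebraic input (ghosts $\Rightarrow$ property $A$) replaces it.
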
 
 
 Weak amenability was first introduced by Cowling and Haagerup in \cite{MR996553} and it was proved in \cite[Theorem 2.1]{MR1220905} that discrete weakly amenable groups are exact (see also \cite[Theorem 12.4.4]{MR2391387}). Theorem \ref{cb_exact} has the following immediate consequence:
   \begin{corx}
  Let $G$ be a locally compact second countable group. If $G$ is weakly amenable, then $G$ is exact. 
   \end{corx}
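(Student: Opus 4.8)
The plan is to deduce the corollary directly from the implication $(2)\Rightarrow(1)$ of Theorem~\ref{cb_exact}, by manufacturing the tube-supported Schur multipliers required there out of the completely bounded Fourier multipliers supplied by weak amenability. Recall that $G$ is \emph{weakly amenable} precisely when there is a constant $\Lambda\geq 1$ and a net $(u_i)$ of continuous, compactly supported completely bounded multipliers of the Fourier algebra $A(G)$ satisfying $\norm{u_i}_{M_0A(G)}\leq\Lambda$ for all $i$ and $u_i\to 1$ uniformly on compact subsets of $G$; the least such $\Lambda$ is the Cowling--Haagerup constant. I would take the constant $C$ of condition~(2) of Theorem~\ref{cb_exact} to be $C:=\Lambda$ and show that each $u_i$ gives rise to a Schur multiplier whose data meet the three demands of that condition.

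The conversion rests on the locally compact Herz--Schur/multiplier identification of Bozejko--Fendler: for a continuous $u\colon G\to\C$, the kernel $k_u(s,t):=u(s^{-1}t)$ is a continuous Schur multiplier if and only if $u\in M_0A(G)$, and in that case $\norm{k_u}_S=\norm{u}_{M_0A(G)}$. Given a compact $K\subseteq G$ and $\epsilon>0$, I would invoke the uniform convergence $u_i\to 1$ on $K$ to choose an index $i$ with $\sup_{g\in K}|u_i(g)-1|<\epsilon$, and set $k:=k_{u_i}$. Three verifications then finish the argument: (a) the norm bound $\norm{k}_S=\norm{u_i}_{M_0A(G)}\leq\Lambda=C$; (b) the support condition, since $\supp k=\{(s,t):s^{-1}t\in\supp u_i\}$, so that with $L:=\supp u_i$ (compact) one has $\supp k\subseteq\Tube(L)$ by the definition of the tube around $L$; and (c) the closeness estimate, because $(s,t)\in\Tube(K)$ forces $s^{-1}t\in K$, whence $|k(s,t)-1|=|u_i(s^{-1}t)-1|<\epsilon$. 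With condition~(2) of Theorem~\ref{cb_exact} thereby established, the exactness of $G$ follows.

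The main obstacle is to make the transference identity $\norm{k_u}_S=\norm{u}_{M_0A(G)}$ precise in the continuous, non-discrete setting, and to match the left/right conventions: one must confirm that the normalization of $\Tube(K)$ used in Theorem~\ref{cb_exact} is exactly $\{(s,t):s^{-1}t\in K\}$ (replacing $K$ by $K\cup K^{-1}$ if symmetry is needed), so that $\supp k_u$ lands inside a single tube and the kernel is genuinely continuous, as the statement requires. Granting the Bozejko--Fendler identity and this bookkeeping, the remaining steps are routine, and no hypotheses beyond the local compactness and second countability already assumed in Theorem~\ref{cb_exact} are needed.
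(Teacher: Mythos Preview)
Your proposal is correct and follows the same route as the paper: verify condition~(2) of Theorem~\ref{cb_exact} using the kernels $k(s,t)=\phi(s^{-1}t)$ coming from the weak-amenability net. The only difference is that the paper \emph{defines} weak amenability directly via Herz--Schur multipliers $\phi$ with bounded $B_2$-norm, where $\norm{\phi}_{B_2}:=\norm{\hat\phi}_S$ is already the Schur norm of the associated kernel; hence your ``main obstacle'' (the Bo\.{z}ejko--Fendler identity $\norm{k_u}_S=\norm{u}_{M_0A(G)}$) is absorbed into the definitions and the corollary is genuinely immediate.
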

 {\bf Acknowledgments}. We would like to thank Vadim Alekseev, Martin Finn-Sell, Uffe Haagerup and Rufus Willett for helpful discussions on the subject. Finally, thanks to S\o{}ren Knudby and Sven Raum for a careful reading of our first draft and the referee for their useful suggestions.

  \section{Preliminaries} \label{sec:preliminaries}
 In this section we introduce some notions in coarse geometry, which will be used throughout the article. In particular, we will introduce the notion of metric lattices in a locally compact second countable group and explain the connection between Yu's property $A$ on the metric lattices and property $A$ on the locally compact second countable group. We will end this section with two operator algebraic characterizations of Yu's property $A$ on uniformly locally finite discrete metric spaces in terms of the uniform Roe algebra and its ghost ideal.
 
  
\begin{defi}
Let $(X,d_X)$ and $(Y,d_Y)$ be two metric spaces. We say a map $f:X\rightarrow Y$ is \emph{coarse} if the inverse image under $f$ of any bounded subset in $Y$ is bounded in $X$ and if for all $R>0$, there exists $S>0$ such that 
\[
 d_X(x,x')\leq R \Longrightarrow d_Y(f(x),f(x'))\leq S, \ \text{for all $x,x' \in X$}.
\]
Two coarse maps $f, g:X\rightarrow Y$ are \emph{close} if $d_Y(f(x),g(x))$ is bounded on $X$.
\end{defi}
\begin{defi}
We say that two metric spaces $X$ and $Y$ are \emph{coarsely equivalent} if there exist two coarse maps $f:X\rightarrow Y$ and $g:Y\rightarrow X$ such that $g\circ f$ and $f\circ g$ are close to the identity maps on $X$ and $Y$, respectively. 
\end{defi}

\begin{defi}
A metric space $(Z,d)$ is \emph{uniformly locally finite} if $\sup_{z\in Z}|B(z,S)|< \infty$ for all $S>0$, where $B(z,S)$ denotes the closed ball $\{x\in Z:d(z,x)\leq S\}$.
\end{defi}
A uniformly locally finite metric space is necessarily discrete and countable. However, the uniformly local finiteness is not coarsely invariant even on uniformly discrete spaces. Here a metric space $(Z,d)$ is uniformly discrete if there exists $\delta > 0$ such that $d(z,w) > \delta$ for all distinct $z,w \in Z$.
\begin{exem}[{{\cite[Example 3.4]{HP}}}]
Consider the triple $(D_n,d_n,x_n)$, where $D_n$ is the discrete space with $n$ points, $d_n$ is the discrete metric on $D_n$ and each $x_n$ is a fixed element in $D_n$. Let $Z=\sqcup_{n\in \N}D_n$ equipped with the following metric $d$:
\begin{align*}
d(z,y)=d_{j(z)}(z,x_{j(z)})+|j(z)-j(y)|+d_{j(y)}(y,x_{j(y)}),
\end{align*}
where $j(x)=n$ if and only if $x\in D_n$. So $(Z,d)$ is a proper uniformly discrete space that is not uniformly locally finite but is coarsely equivalent to $\N$. In particular, $(Z,d)$ has bounded geometry (see definition below). The metric space $(Z,d)$ is not uniformly locally finite because $|B(x_n,1)|\geq n$ for all $n \in \N$.
\end{exem}

In order to make it into a coarse invariant, we instead consider the following metric notion, which was first introduced by Roe. 
\begin{defi}[{{\cite[Definition 2.3(ii)]{MR1399087}}}]
A metric space $(X,d_X)$ has \emph{bounded geometry} if it is coarsely equivalent to a uniformly locally finite (discrete) metric space $(Z,d_Z)$.
\end{defi}
In fact, we can choose the metric space $(Z,d_Z)$ in the above definition to be a lattice of $(X,d_X)$ in the following sense:

\begin{defi}
Let $(X,d)$ be a metric space. We say that a uniformly discrete subspace $Z\subseteq X$ is a \emph{metric lattice}, if there is $R>0$ such that $X=\bigcup_{z\in Z}B(z,R)$. 
\end{defi}
Note that the inclusion map $Z\subseteq X$ is a coarse equivalence and every metric space always contains metric lattices by Zorn's lemma. Moreover, a metric space $(X,d)$ has bounded geometry if and only if it contains a uniformly locally finite metric lattice $(Z,d)$ \cite[Proposition~3.D.15]{Cornulier2016}.

  

Property A on discrete metric spaces was first introduced by Yu in \cite{MR1728880}. Later on, Roe generalized property $A$ to proper metric spaces with bounded geometry in \cite[Definition 2.1]{MR2115671}. Here we give an equivalent definition of property $A$ in terms of positive type
kernels.

  \begin{defi}[{{\cite[Proposition 2.3]{MR2115671}} }]
    Let $(X,d)$ be a proper metric space with bounded geometry. We say that $(X,d)$ has \emph{property $A$} if for any $R>0$ and $\varepsilon > 0$, there exist $S > 0$ and a continuous positive type kernel $k \colon X \times X \to \C$ such that
    \begin{itemize}
     \item If $d(x,y) > S$, then $k(x,y) = 0$.
     \item If $d(x,y) \leq R$, then $|k(x,y) - 1| < \varepsilon$.
    \end{itemize}
  \end{defi}
  This definition coincides with Yu's original definition in \cite[Definition 2.1]{MR1728880} when $(X,d)$ is a discrete uniformly locally finite metric space. It is well-known that for discrete metric spaces Yu's property $A$ is invariant under coarse equivalence \cite[Proposition~4.2]{MR1871980} \cite[Proposition~1.1.3]{MR2562146}. 

Recall that every locally compact second countable group $G$ admits a proper left-invariant metric $d$ that generates the topology on $G$ and that such a metric is unique up to coarse equivalence (see \cite{MR0348037} and \cite[Theorem 2.8]{HP}). We will say that a metric that generates the topology on $G$ is a \emph{compatible} metric. Moreover, the proper metric space $(G,d)$ has bounded geometry \cite[Lemma 3.3]{HP}. There is a nice connection between Yu's property $A$ and property $A$ on locally compact second countable groups through metric lattices.

\begin{prop}[\cite{MR2115671}, Lemma 2.2]\label{metriclattice-propertyA}
Let $G$ be a locally compact second countable group equipped with a proper left-invariant compatible metric $d$. Then the following are equivalent:
\begin{itemize}
\item[(1)] The metric space $(G,d)$ has property $A$.
\item[(2)] Every uniformly locally finite metric lattice $(Z,d)$ in $G$ has Yu's property $A$.
\item[(3)] There exists a uniformly locally finite metric lattice $(Z,d)$ in $G$ satisfying Yu's property $A$.
\end{itemize}
\end{prop}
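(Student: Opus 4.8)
The plan is to establish the cycle of implications $(2)\Rightarrow(3)\Rightarrow(1)\Rightarrow(2)$, of which only $(3)\Rightarrow(1)$ requires real work. The implication $(2)\Rightarrow(3)$ is immediate: by \cite[Lemma 3.3]{HP} the proper metric space $(G,d)$ has bounded geometry, hence by \cite[Proposition~3.D.15]{Cornulier2016} it contains a uniformly locally finite metric lattice $Z$, and (2) applies to this particular $Z$.

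For $(1)\Rightarrow(2)$ I would simply restrict kernels. Let $Z\subseteq G$ be any uniformly locally finite metric lattice and fix $R,\epsilon>0$. Choosing $S>0$ and a continuous positive type kernel $k\colon G\times G\to\C$ witnessing property $A$ of $(G,d)$ for these parameters, the restriction $k|_{Z\times Z}$ is again of positive type (positive-definiteness passes to sub-tuples), is automatically continuous because $Z$ is discrete, and inherits the support and approximation conditions verbatim since the metric on $Z$ is the restriction of $d$. As $Z$ is uniformly locally finite, this is exactly Yu's property $A$.

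The substantive step is $(3)\Rightarrow(1)$. Fix a uniformly locally finite metric lattice $Z\subseteq G$ with Yu's property $A$ and a constant $R_0>0$ with $G=\bigcup_{z\in Z}B(z,R_0)$. Given $R,\epsilon>0$, I would first invoke Yu's property $A$ for $Z$ with the enlarged parameters $(R+2R_0+2,\epsilon)$, obtaining $S_0>0$ and a positive type kernel $k_Z$ on $Z\times Z$ that vanishes when $d(z,w)>S_0$ and satisfies $|k_Z(z,w)-1|<\epsilon$ when $d(z,w)\le R+2R_0+2$. Since $G$, being a metric space, is paracompact, choose a locally finite open cover $\{V_i\}_{i\in I}$ of $G$ with $\operatorname{diam}V_i\le 1$ together with a subordinate continuous partition of unity $\{\psi_i\}_{i\in I}$; for each $i$ pick $g_i\in V_i$ and then $z_i\in Z$ with $d(g_i,z_i)\le R_0$. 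The candidate kernel on $G$ is $k(g,h)=\sum_{i,j\in I}\psi_i(g)\psi_j(h)\,k_Z(z_i,z_j)$. Locally this is a finite sum of products of continuous functions, so $k$ is continuous; and it is of positive type, since for any finitely supported family $(c_g)$ one computes $\sum_{g,h}c_g\overline{c_h}\,k(g,h)=\sum_{i,j}a_i\overline{a_j}\,k_Z(z_i,z_j)\ge 0$, where $a_i=\sum_g c_g\psi_i(g)$ and the $\psi_i$ are real-valued, using that $k_Z$ is of positive type on $Z$. A triangle-inequality bookkeeping finishes the job: whenever $\psi_i(g)\psi_j(h)\ne 0$ one has $g\in V_i$, $h\in V_j$, whence $d(g,h)-2R_0-2\le d(z_i,z_j)\le d(g,h)+2R_0+2$; this shows $k$ vanishes off the $(S_0+2R_0+2)$-neighbourhood of the diagonal, and, using $\sum_i\psi_i\equiv 1$, that $|k(g,h)-1|<\epsilon$ whenever $d(g,h)\le R$. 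Hence $(G,d)$ has property $A$.

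The hard part is precisely the continuity requirement in $(3)\Rightarrow(1)$: transporting $k_Z$ naively along a nearest-point map $G\to Z$ produces a positive type kernel of the right support and approximation behaviour but almost never a continuous one, and the partition-of-unity averaging above is what continuously "spreads out" $k_Z$ while preserving positive-definiteness — a conjugation $\sum_{i,j}\psi_i(g)\psi_j(h)k_Z(z_i,z_j)$ of a positive type kernel is again of positive type. I note that coarse invariance of Yu's property $A$ among discrete spaces (\cite[Proposition~4.2]{MR1871980}) would give an alternative route to the fact that conditions (2) and (3) do not depend on the choice of lattice, but the construction above makes this detour unnecessary.
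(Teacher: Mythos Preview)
The paper does not actually supply a proof of this proposition; it simply records the statement and attributes it to Roe \cite[Lemma~2.2]{MR2115671}. Your argument is correct and is essentially the standard one: $(1)\Rightarrow(2)$ by restricting kernels, $(2)\Rightarrow(3)$ by the existence of a uniformly locally finite lattice, and for $(3)\Rightarrow(1)$ one transports $k_Z$ to $G$ by the partition-of-unity averaging $k(g,h)=\sum_{i,j}\psi_i(g)\psi_j(h)\,k_Z(z_i,z_j)$, which preserves positive type (you correctly note that the $\psi_i$ must be real-valued for the conjugation identity $\overline{a_j}=\sum_l\overline{c_l}\,\psi_j(g_l)$ to hold, and that repetitions among the $z_i$ cause no trouble in the positivity check), produces a continuous kernel by local finiteness, and the triangle-inequality bookkeeping with the diameter bound on the $V_i$ gives the required support and approximation estimates.

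One very minor remark: in the approximation estimate you implicitly use that the nonzero weights $\psi_i(g)\psi_j(h)$ sum to exactly $1$, so that a convex combination of quantities each strictly below $\epsilon$ is again strictly below $\epsilon$; this is fine since $\sum_i\psi_i\equiv 1$ guarantees at least one nonzero weight. Otherwise there is nothing to add.
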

When we say that a locally compact second countable group $G$ has property $A$, it will mean that there exists a proper left-invariant compatible metric $d$ such that the metric space $(G,d)$ has property $A$. The proposition above implies that property $A$ on locally compact second countable groups is a coarsely invariant property and it is independent of the choices of the proper left-invariant compatible metric. We refer to \cite{MR2115671, MR3420532} for more on property $A$ for locally compact second countable groups.

There is a well-known operator algebraic characterization of Yu's property $A$ on uniformly locally finite metric spaces in terms of uniform Roe algebras which we now recall. Every $a\in B(\ell^2(Z))$ can be represented as a $Z\times Z$ matrix: $a=[a_{x,y}]_{x,y\in Z}$, where $a_{x,y}:=\ip{a\delta_y}{\delta_x}\in \C$.

We define the \emph{propagation} of $a=[a_{x,y}]_{x,y\in Z}\in B(\ell^2(Z))$ by
\begin{align*}
\text{Prop}(a)=\sup\{d(x,y):x,y\in Z, a_{x,y}\neq 0\}.
\end{align*}
Let $E_R$ be the set of all bounded operators on $\ell^2(Z)$ whose propagations are at most $R$. In fact, $E_R$ is an operator system, i.e.~a self-adjoint closed subspace of $B(\ell^2(Z))$ which contains the unit of $B(\ell^2(Z))$. Moreover, the union $\bigcup_{R>0} E_R$ is a $*$-subalgebra of $B(\ell^2(Z))$ .
\begin{defi}
The $C^*$-algebra defined by the operator norm closure in $B(\ell^2(Z))$
\begin{align*}
C^*_u(Z)=\overline{\bigcup_{R>0}E_R}
\end{align*}
is called the \emph{uniform Roe algebra} of $Z$.
\end{defi}
\begin{thm}[{{\cite[Theorem~5.3]{MR1905840}\label{A_Nuclear} \cite[Theorem 5.5.7]{MR2391387}}}]
Let $(Z,d)$ be a uniformly locally finite metric space. Then the following conditions are equivalent:
\begin{itemize}
\item[(1)] The metric space $(Z,d)$ has Yu's property $A$.
\item[(2)] The uniform Roe algebra $C_u^*(Z)$ is nuclear.
\end{itemize}
\end{thm}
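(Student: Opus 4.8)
The plan is to prove both implications through the completely positive approximation property (CPAP): a separable $C^*$-algebra is nuclear if and only if the identity map is a pointwise-norm limit of contractive completely positive (ccp) maps that factor through nuclear (e.g. finite-dimensional) $C^*$-algebras. Throughout I will use the compression $E\colon B(\ell^2(Z))\to\ell^\infty(Z)$ onto the diagonal, which is a conditional expectation restricting to $C_u^*(Z)$, and the matrix units $e_{x,y}=|\delta_x\rangle\langle\delta_y|$, which lie in $C_u^*(Z)$ with $\mathrm{Prop}(e_{x,y})=d(x,y)$. The central device is that any continuous positive type kernel $k\colon Z\times Z\to\C$ with $k(x,x)=1$ induces a ccp Schur multiplier $m_k\colon a=[a_{x,y}]\mapsto[k(x,y)a_{x,y}]$ on $B(\ell^2(Z))$: writing $k(x,y)=\langle\xi_x,\xi_y\rangle$ with unit vectors $\xi_x$ exhibits $m_k(a)=V^*(a\otimes 1)V$ for the isometry $V\delta_x=\delta_x\otimes\xi_x$, and if $\mathrm{Prop}(k)\le S$ then $m_k$ preserves $C_u^*(Z)$ and lowers propagation.

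For $(1)\Rightarrow(2)$ I would fix $R,\epsilon>0$, take a property $A$ kernel $k$ with $\mathrm{Prop}(k)\le S$ and $|k(x,y)-1|<\epsilon$ whenever $d(x,y)\le R$, and estimate $\|m_k(a)-a\|$ for $a\in E_R$. The difference has entries $(k(x,y)-1)a_{x,y}$ of modulus $\le\epsilon\|a\|$ supported on the $R$-band, so by uniform local finiteness each row and column has at most $N_R:=\sup_x|B(x,R)|<\infty$ nonzero entries, and the Schur test yields $\|m_k(a)-a\|\le\epsilon N_R\|a\|$; choosing $\epsilon<\delta/N_R$ makes $m_k$ as close to the identity on $E_R$ as desired. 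It then remains to factor $m_k$ (approximately, on $E_R$) as $\psi\circ\phi$ through a nuclear algebra of the form $\ell^\infty(Z,M_N)$: taking the $\xi_x$ supported in $B(x,S)$ and using that the graph with edges $\{d\le 2S\}$ has degree bounded by uniform local finiteness, one splits $V$ into finitely many block-diagonal pieces and reassembles $m_k$ with ccp maps through $\ell^\infty(Z,M_N)$. Since that algebra is nuclear, CPAP for $C_u^*(Z)$ follows. The bookkeeping in this factorisation is routine but is precisely where uniform local finiteness is needed.

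For $(2)\Rightarrow(1)$ I would run the construction in reverse. From nuclearity obtain, for a prescribed finite set $\mathcal{F}$ and tolerance, a finite-rank ccp map $\theta=\psi\circ\phi$ close to the identity on $\mathcal{F}$, and set $k(x,y)=\langle\theta(e_{x,y})\delta_y,\delta_x\rangle$. Complete positivity supplies positivity for free: the matrix $[e_{x_i,x_j}]_{i,j}$ is positive in $M_m(C_u^*(Z))$, since against a test vector $(\eta_i)$ its quadratic form equals $|\sum_i c_i|^2$ with $c_i=\langle\delta_{x_i},\eta_i\rangle$, so applying the amplified map and compressing by the $\delta_{x_i}$ shows $[k(x_i,x_j)]\ge0$; thus $k$ is of positive type with $k(x,x)\approx 1$. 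Finite propagation of $k$ is arranged by compressing $\theta$ with the diagonal expectation $E$ and truncating to a band, again controlling norms through uniform local finiteness. The delicate point is the closeness $k(x,y)\approx 1$ on the $R$-band, which must hold uniformly over the infinitely many pairs with $d(x,y)\le R$ whereas CPAP controls only a finite set; I would resolve this by edge-colouring the bounded-degree $R$-band graph into finitely many partial bijections (matchings) $\sigma_1,\dots,\sigma_m$, so that every $a\in E_R$ decomposes as $\sum_j D_j t_{\sigma_j}$ with $D_j\in\ell^\infty(Z)$ and $t_{\sigma_j}$ the associated partial translations, and then placing these finitely many generators, together with a separating set of diagonal projections, into $\mathcal{F}$.

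The main obstacle is exactly this uniformity in $(2)\Rightarrow(1)$: turning the pointwise CPAP convergence into a single kernel that is simultaneously close to $1$ across the whole $R$-band and of genuinely finite propagation. This is where uniform local finiteness does the real work, bounding both the degree of the $R$-band graph (hence the number of partial-translation generators) and the row and column sums (so the Schur test applies). A conceptually cleaner but heavier alternative, matching the reference \cite{MR1905840}, is to pass to the coarse groupoid $G(Z)$, identify $C_u^*(Z)\cong C_r^*(G(Z))$, and invoke the general equivalence for \'etale groupoids that topological amenability (equivalently, property $A$ for $Z$) holds if and only if the reduced groupoid $C^*$-algebra is nuclear.
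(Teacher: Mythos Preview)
The paper does not give its own proof of this statement; it is quoted as a known result from Skandalis--Tu--Yu and Brown--Ozawa and used as a black box elsewhere in the article. So there is no in-paper argument to compare against.

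Your sketch of $(1)\Rightarrow(2)$ follows the Brown--Ozawa strategy and is correct in outline: the Schur multiplier $m_k$ approximates the identity on each $E_R$, and the factorisation of $m_k$ through a type~I algebra built from the finitely-supported vectors $\xi_x$ yields the CPAP, with uniform local finiteness controlling the block sizes. The groupoid alternative you mention at the end is precisely the Skandalis--Tu--Yu route.

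Your $(2)\Rightarrow(1)$, however, has a real gap at exactly the point you flag. You define $k(x,y)=\langle\theta(e_{x,y})\delta_y,\delta_x\rangle$ and want $|k(x,y)-1|<\epsilon$ uniformly over the $R$-band, but CPAP only controls $\theta$ on the \emph{finite} set $\mathcal F$, and the matrix units $e_{x,y}$ are infinitely many distinct operators that are not in $\mathcal F$. Your proposed fix, placing the finitely many partial translations $t_{\sigma_j}$ into $\mathcal F$, gives $|\langle\theta(t_{\sigma_j})\delta_y,\delta_x\rangle-1|<\epsilon$ for $x=\sigma_j(y)$; but this is a \emph{different} number from $k(x,y)$, since $e_{x,y}=p_x t_{\sigma_j}$ and $\theta$ is only completely positive, not multiplicative. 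Conversely, if you switch to the kernel built from $\theta(t_{\sigma_j})$ you lose the positive-type argument, because the operator in the $(i,j)$-slot now depends on the pair $(x_i,x_j)$ and the matrix $[t_{\sigma(i,j)}]$ is not a priori positive. The missing ingredient is a device that forces the approximating map to be an $\ell^\infty(Z)$-bimodule map, so that it is literally a Schur multiplier and closeness on the finitely many $t_{\sigma_j}$ automatically propagates entrywise across the whole band; in the standard proofs this is achieved either by averaging $\theta$ over the unitary group of the diagonal MASA, or by extracting a POVM from $\phi|_{\ell^\infty(Z)}$ in a factorisation $\theta=\psi\circ\phi$ through a matrix algebra and building the vectors $\xi_x$ directly from it. Your edge-colouring is the right structural input, but by itself it does not bridge the gap between control on $t_{\sigma_j}$ and control on $e_{x,y}$.
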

The following definition is due to Yu, see \cite[Definition 11.42]{MR2007488}.
\begin{defi}
An operator $a\in C_u^*(Z)$ is called a \emph{ghost} if $a_{x,y}\ra 0$ as $x,y \ra \infty$. We denote by $G^*(Z)$ the collection of all ghost operators, which forms a closed two sided ideal in $C^*_u(Z)$ and contains the compact operators on $\ell^2(Z)$.
\end{defi}
A natural question is that whether all ghost operators are compact? One can prove that for a uniformly locally finite space with  Yu's property $A$, all ghost operators are compact (\cite[Proposition~11.43]{MR2007488}). Recently, the converse implication was proved by Roe and Willett.
\begin{thm}[{{\cite[Theorem 1.3]{MR3146831}}}]\label{ghost-A}
A uniformly locally finite metric space without Yu's property $A$ always admits non-compact ghosts.
\end{thm}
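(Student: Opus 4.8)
\emph{Proof sketch.} The plan, following Roe and Willett \cite{MR3146831}, is to build from the failure of property $A$ a bounded operator $T$ on $\ell^2(Z)$ that is \emph{diffuse} --- all of its matrix entries $T_{x,y}$ are small and tend to $0$ as $x,y\to\infty$, so that $T$ is a ghost once it is known to lie in $C^*_u(Z)$ --- yet has infinite rank, hence is not compact. The first step is to turn the failure of property $A$ into a quantitative operator-theoretic obstruction. For uniformly locally finite spaces, property $A$ is equivalent to the operator norm localization property (and to the metric sparsification property), by work of Chen--Tessera--Wang--Yu and Sako; its negation yields a function $r(c)$ such that for every $c\in(0,1)$ and every $S>0$ there is $A\in\bigcup_R E_R$ with $\text{Prop}(A)\le r(c)$ and $\|A\|=1$ but $\|A\xi\|<c$ for every unit vector $\xi$ with $\operatorname{diam}(\operatorname{supp}\xi)\le S$. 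Since removing a finite (hence bounded) subset of $Z$ does not change its coarse type, and property $A$ is a coarse invariant, these witnesses can be produced supported arbitrarily far out towards infinity. Fixing a sequence $c_n\downarrow 0$ and a finite exhaustion of $Z$, I extract operators $A_n$ of propagation $\le r(c_n)$, defeating localization at ever-larger scales $S_n\to\infty$, each (after compressing to a finite window and renormalizing) supported on a finite set $W_n$, the $W_n$ pairwise separated and escaping to infinity.

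Now the assembly. Each $A_n^*A_n$ is a nonzero finite positive operator on $\ell^2(Z)$ with $\|A_n^*A_n\|=1$, supported on a bounded neighborhood $V_n$ of $W_n$, whose diagonal entries $\langle A_n^*A_n\,\delta_x,\delta_x\rangle=\|A_n\delta_x\|^2$ are $<c_n^2$ because each $\delta_x$ is $0$-localized; by Cauchy--Schwarz, every entry of $A_n^*A_n$ is then $<c_n^2$. Arranging the $W_n$ far enough apart that the $V_n$ are pairwise disjoint, set $T:=\bigoplus_n A_n^*A_n$, so $\|T\|=1$. Since there are infinitely many nonzero mutually orthogonal summands, $T$ has infinite rank and is not compact; and since the $V_n$ escape to infinity while every entry of $A_n^*A_n$ is $<c_n^2\to 0$, the entries $T_{x,y}$ tend to $0$ as $x,y\to\infty$ (and vanish once $x$ leaves a bounded set while $y$ stays bounded). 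Thus $T$ is a ghost.

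The one genuinely hard point --- and the place where uniform local finiteness is essential --- is to show $T\in C^*_u(Z)$, i.e.\ that $T$ is a norm-limit of finite-propagation operators. Each summand $A_n^*A_n$ has propagation $\le 2r(c_n)$, but these may grow with $n$, so $T$ need not have finite propagation. Truncating $T$ to propagation $\le R$ perturbs the $n$-th summand by an operator of norm at most $\bigl(\sup_z|B(z,2r(c_n))|\bigr)$ times its largest entry, hence $<\bigl(\sup_z|B(z,2r(c_n))|\bigr)c_n^2$; so one wants to choose the $c_n$ decaying fast enough against the uniform-local-finiteness growth function $N(s):=\sup_z|B(z,s)|$ that these truncation errors tend to $0$, forcing $\|T-T^{(R)}\|\to 0$ as $R\to\infty$. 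Verifying that such a choice is always available when property $A$ fails --- or else, when the localization witnesses are ``expander-like'' and $N(2r(c))c^2$ does not tend to $0$, replacing the block sum by a single bounded-propagation operator whose powers converge in norm to an infinite-rank ghost projection (as for box spaces of expanders, where the powers of the normalized adjacency operator converge to the averaging projection) --- is the delicate content of \cite[Theorem~1.3]{MR3146831}. Once this is in place, $T$ is the desired non-compact ghost.
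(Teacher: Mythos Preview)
The paper does not prove Theorem~\ref{ghost-A}; it is quoted from Roe--Willett \cite[Theorem~1.3]{MR3146831} and used as a black box. So there is no in-paper argument to compare your sketch against --- the relevant comparison is with the cited source.

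Your outline has the right architecture and several correct ingredients: the passage from failure of property~$A$ to failure of the operator norm localization property via Sako; the extraction of positive norm-one witnesses supported on disjoint windows escaping to infinity (using coarse invariance of property~$A$); the observation that positivity together with $\|A_n\delta_x\|<c_n$ forces every matrix entry of $A_n^{*}A_n$ below $c_n^{2}$; and the recognition that the only serious obstacle is membership of the block sum in $C^{*}_u(Z)$.

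The gap is in how you resolve that obstacle. Your blocks $A_n^{*}A_n$ carry propagation $\le 2r(c_n)$ with $r(c_n)$ uncontrolled, and your truncation bound $N(2r(c_n))\,c_n^{2}$ need not tend to zero for \emph{any} choice of $c_n\to 0$, since $r$ and $N$ are dictated by the space. The dichotomy you then offer --- either this quantity can be made small, or the space is ``expander-like'' and one instead takes powers of a single operator converging to a ghost projection --- is not a proof: neither branch is substantiated, and a general space need not fall into either description.

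Roe and Willett organize the construction differently to avoid this. They fix a \emph{single} pair $(c,R)$ witnessing failure of ONL and let only the localization scale $S_n\to\infty$ vary, obtaining positive $a_n$ of propagation $\le R$, $\|a_n\|=1$, with $\|a_n\xi\|<c\|\xi\|$ whenever $\mr{diam}(\supp\xi)\le S_n$. Then $T=\bigoplus a_n$ has propagation $\le R$, so $T\in C^{*}_u(Z)$ automatically. The ghost is not $T$ but $f(T)$ for a continuous $f:[0,1]\to[0,1]$ with $f(1)=1$ and $f\equiv 0$ on $[0,c']$, any $c'\in(c,1)$. Continuous functional calculus keeps $f(T)$ inside $C^{*}_u(Z)$; the operator inequality $f(t)\le (t/c')^{k}$ together with the iterated estimate $\|a_n^{k}\delta_x\|<c^{k}$ (valid for $k\lesssim S_n/R$) drives every entry of $f(a_n)$ below a constant times $(c/c')^{k_n}\to 0$; and $1\in\sigma(a_n)$ forces $\|f(a_n)\|=1$ for all $n$, so $f(T)$ is non-compact. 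The functional-calculus step, which converts a fixed-propagation operator into a ghost without ever leaving $C^{*}_u(Z)$, is the idea your sketch is missing.
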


\section{A Schur multiplier characterization of property $A$} \label{sec:weak amenability and ghost operators}
The purpose of this section is to give a completely bounded Schur multiplier characterization of locally compact groups with property $A$. Consequently, all locally compact weakly amenable groups have property $A$.

Let us start by recalling some definitions. A kernel $k:X\times X\rightarrow \C$ on a nonempty set $X$  is called a $\emph{Schur multiplier}$ if for every operator $a=[a_{x,y}]_{x,y\in X}\in B(\ell^2(X))$ the matrix  $[k(x,y)a_{x,y}]_{x,y\in X}$ represents an operator in $B(\ell^2(X))$, denoted $m_k(a)$. If $k$ is a Schur multiplier, it follows from the closed graph theorem that $m_k$ defines a bounded operator on $B(\ell^2(X))$. We define the Schur norm $\norm{k}_S$ to be the operator norm $\norm{m_k}$ of $m_k$. For instance, any normalized positive type kernel  is a Schur multiplier of norm 1. Here by normalized we mean that $k(x,x) = 1$ for all $x\in X$. The following characterization of Schur multipliers is well-known and is essentially due to Grothendieck.
\begin{thm}[{{\cite[Theorem 5.1]{MR1818047}}}]\label{Schur multipliernorm}
Let $k:X\times X\ra \C$ be a kernel, and let $C\geq 0$. The following conditions are equivalent:
\begin{itemize}
\item[(1)] $k$ is a Schur multiplier with $\norm{k}_S\leq C$.
\item[(2)] There exist a Hilbert space $H$ and two bounded maps $\xi,\eta:X\ra H$ such that $k(x,y)=\ip{\eta_y}{\xi_x}$ for all $x,y \in X$ and $\sup_{x\in X}\norm{\xi_x}\cdot\sup_{y\in X} \norm{\eta_y}\leq C$.
\end{itemize}
\end{thm}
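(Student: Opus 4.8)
The plan is to prove the two implications separately, disposing of the elementary one first. For $(2)\Rightarrow(1)$ I would argue directly by ampliation. Given the factorization $k(x,y)=\ip{\eta_y}{\xi_x}$, test $m_k(a)$ against finitely supported vectors $f,g\in\ell^2(X)$: the quantity $\ip{m_k(a)f}{g}=\sum_{x,y}k(x,y)a_{x,y}f(y)\overline{g(x)}$ can be rewritten, using the factorization, as $\ip{(a\otimes 1_H)F}{G}$, where $F=\sum_y f(y)\,\delta_y\otimes\eta_y$ and $G=\sum_x g(x)\,\delta_x\otimes\xi_x$ in $\ell^2(X)\otimes H$. Since $\norm{a\otimes 1_H}=\norm{a}$, $\norm{F}\leq(\sup_y\norm{\eta_y})\norm{f}$ and $\norm{G}\leq(\sup_x\norm{\xi_x})\norm{g}$, Cauchy--Schwarz yields $\norm{m_k(a)}\leq(\sup_x\norm{\xi_x})(\sup_y\norm{\eta_y})\norm{a}\leq C\norm{a}$; this simultaneously shows $m_k$ is well defined and bounded, so $\norm{k}_S\leq C$.

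The substantial direction is $(1)\Rightarrow(2)$, which I would route through operator-space theory. The first step is to observe that $m_k$ is \emph{automatically} completely bounded with $\norm{m_k}_{cb}=\norm{m_k}=\norm{k}_S$. Concretely, under the identification $M_n(B(\ell^2(X)))\cong B(\ell^2(X\times\{1,\dots,n\}))$ the amplification $m_k\otimes\mr{id}_{M_n}$ is again a Schur multiplier, namely $m_{\tilde k}$ for the kernel $\tilde k((x,p),(y,q))=k(x,y)$, and one checks $\norm{m_{\tilde k}}=\norm{m_k}$ (intuitively because the auxiliary indices enter only through the normalized positive-type all-ones kernel). Taking the supremum over $n$ gives $\norm{m_k}_{cb}=\norm{m_k}\leq C$.

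The second step applies the Wittstock--Paulsen representation of completely bounded maps: since $B(\ell^2(X))$ is unital, there exist a Hilbert space $\mc K$, a unital $*$-representation $\pi\colon B(\ell^2(X))\to B(\mc K)$, and bounded operators $V,W\colon\ell^2(X)\to\mc K$ with $\norm{V}\,\norm{W}=\norm{m_k}_{cb}\leq C$ and $m_k(a)=V^*\pi(a)W$ for all $a$. It then remains to extract the vectors. Fixing a base point $x_0\in X$ and writing $e_{x,y}$ for the matrix units of $B(\ell^2(X))$, I would set $\xi_x=\pi(e_{x_0,x})V\delta_x$ and $\eta_y=\pi(e_{x_0,y})W\delta_y$. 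Evaluating $m_k$ at $e_{x,y}$ gives $k(x,y)=\ip{\pi(e_{x,y})W\delta_y}{V\delta_x}$, and factoring $e_{x,y}=e_{x,x_0}e_{x_0,y}$ together with $\pi(e_{x,x_0})^*=\pi(e_{x_0,x})$ rearranges this into exactly $k(x,y)=\ip{\eta_y}{\xi_x}$. Because each $\pi(e_{x_0,x})$ is a partial isometry, $\norm{\xi_x}\leq\norm{V}$ and $\norm{\eta_y}\leq\norm{W}$, so $\sup_x\norm{\xi_x}\cdot\sup_y\norm{\eta_y}\leq\norm{V}\,\norm{W}\leq C$, as required; one takes $H=\mc K$.

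The main obstacle is the first step of the hard direction, the automatic complete boundedness $\norm{m_k}_{cb}=\norm{m_k}$ of a Schur multiplier, since that is precisely where the Grothendieck-type content is concentrated; once it is established (this is a standard fact, e.g.\ via Smith's results on bimodule maps), the representation theorem and the matrix-unit computation are essentially bookkeeping. A secondary point requiring mild care is convergence when $X$ is infinite, which is handled throughout by first testing on finitely supported vectors and on matrix units as above.
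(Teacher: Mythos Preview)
The paper does not prove this theorem at all: it is stated with a citation to \cite[Theorem~5.1]{MR1818047} and prefaced by ``The following characterization of Schur multipliers is well-known and is essentially due to Grothendieck,'' with no argument given. So there is no proof in the paper to compare against.

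That said, your proposal is correct and is one of the standard routes to this result. The $(2)\Rightarrow(1)$ direction via ampliation is fine as written. For $(1)\Rightarrow(2)$, your three-step scheme (automatic complete boundedness, Wittstock--Paulsen factorization $m_k(a)=V^*\pi(a)W$, then extraction of $\xi_x=\pi(e_{x_0,x})V\delta_x$ and $\eta_y=\pi(e_{x_0,y})W\delta_y$ via matrix units) is exactly the operator-space proof one finds, for instance, in Pisier's book. You correctly identify the only real work as the equality $\norm{m_k}_{cb}=\norm{m_k}$; your heuristic about the all-ones kernel is a bit loose, but your pointer to Smith's theorem on $\ell^\infty(X)$-bimodule maps (MASA-bimodule maps on $B(\ell^2(X))$ are automatically completely bounded with the same norm) is the right justification, and once that is in hand the matrix-unit computation you wrote checks out line by line.
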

Let $G$ be a locally compact group. A continuous function $\phi:G\ra \C$ is a \emph{Herz--Schur multiplier} if and only if the kernel $\hat{\phi}:G\times G\ra \C$ defined by
\begin{align*}
\hat{\phi}(s,t)=\phi(s^{-1}t),\quad s,t\in G
\end{align*}
is a Schur multiplier on $G$. We denote by $B_2(G)$ the Banach space of Herz--Schur multipliers on $G$ equipped with the Herz--Schur norm $\norm{\phi}_{B_2}=\norm{\hat{\phi}}_S$.

\begin{defi}[Weak amenability \cite{MR996553}]
A locally compact group $G$ is \emph{weakly amenable} if there exists a net $(\phi_i)_{i\in I}$ of continuous, compactly supported Herz--Schur multipliers on $G$, converging uniformly to $1$ on compact sets, and such that $\sup_i \|\phi_i\|_{B_2} < \infty$.
\end{defi}



It is well-known that \emph{discrete} countable weakly amenable groups have Yu's property $A$ \cite{MR1220905, MR1138840} and see also \cite[Remark 3.3]{Anantharaman-Delaroche2009}. In what follows, we will show that the same statement is also true for all locally compact second countable groups. The idea of the proof is to give a completely bounded Schur multiplier characterization for locally compact groups with property $A$.

The next result is an analogue of   \cite[Proposition~11.43]{MR2007488} where we use Schur multipliers in place of positive type kernels in the definition of property $A$. 
\begin{prop}\label{wa-ghost-compact}
Let $(Z,d)$ be a uniformly locally finite metric space. Assume that there exists a constant $C>0$ such that for any $R>0$ and $\epsilon>0$, there exist $S>0$ and a Schur multiplier $k:Z\times Z\ra \C$ with $\norm{k}_S\leq C$ such that
\begin{itemize}
\item If $d(x,y)>S$, then $k(x,y)=0$.
\item If $d(x,y)\leq R$, then $|k(x,y)-1|<\epsilon$.
\end{itemize}
Then every ghost operator in $C_u^*(Z)$ is compact. In particular, it follows from Theorem \ref{ghost-A} that the metric space $(Z,d)$ has Yu's property $A$.
\end{prop}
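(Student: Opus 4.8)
The plan is to show directly that any ghost operator $T \in C_u^*(Z)$ is compact, by using the hypothesised Schur multipliers to approximate $T$ in norm by finite-rank operators. Since $T$ lies in $C_u^*(Z)$, for a fixed $\eta > 0$ I can first choose $T_0 \in E_R$ for some $R > 0$ with $\norm{T - T_0} < \eta$. The key point is that the "cut-off" $m_k(T)$ of $T$ by a Schur multiplier supported near the diagonal is close to $T$ itself: on the band $\{d(x,y) \leq R\}$ where $T_0$ is supported, $k$ is within $\epsilon$ of $1$, so $\norm{m_k(T_0) - T_0} \leq \norm{m_{k-1}(T_0)}$, and the latter is small because $k - 1$ is uniformly small on the relevant band and $T_0$ has propagation at most $R$ in a uniformly locally finite space (so a crude Schur-test bound in terms of $\sup_{z}|B(z,R)|$, $\epsilon$, and $\norm{T_0}$ applies). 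Combining, $m_k(T)$ is within $O(\eta) + O(\epsilon)$ of $T$ in norm, where additionally $\norm{m_k(T)} \leq C\norm{T}$.

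Next I would exploit that $T$ is a ghost. The operator $S_k := m_k(T)$ has propagation at most $S$ (since $k$ vanishes off the $S$-band), and its matrix entries $(S_k)_{x,y} = k(x,y) T_{x,y}$ still tend to $0$ as $x,y \to \infty$ because $|k(x,y)| \leq \norm{k}_S$ is bounded. A finite-propagation ghost operator on a uniformly locally finite space is compact: restricting to the $S$-band, each row and column has at most $N := \sup_z |B(z,S)| < \infty$ nonzero entries, and all but finitely many of these entries are smaller than any prescribed threshold, so $S_k$ is the norm-limit of its truncations to finite subsets of $Z$, each of which is finite-rank. Hence $S_k$ is compact.

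Therefore $T$ is within $O(\eta) + O(\epsilon)$ of a compact operator; letting $\eta, \epsilon \to 0$ shows $T$ lies in the (norm-closed) ideal of compact operators, so $T$ is compact. The conclusion that $(Z,d)$ has Yu's property $A$ is then immediate from Theorem \ref{ghost-A}: a uniformly locally finite space \emph{without} property $A$ admits a non-compact ghost, so if every ghost is compact the space must have property $A$.

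The main obstacle is making the norm estimate $\norm{m_{k-1}(T_0)}$ small rigorous and uniform. One must be careful that $k - 1$ is only controlled (by $\epsilon$) on the band $d(x,y) \leq R$, which is exactly where $T_0$ is supported, so $m_{k-1}(T_0) = m_{(k-1)\chi_R}(T_0)$ and it suffices to bound the Schur norm of $(k-1)$ restricted to that band. Here uniform local finiteness is essential: the restricted kernel has a band structure, and a Schur multiplier supported on a band of propagation $R$ in a space with $\sup_z |B(z,R)| \leq N_R$ has Schur norm at most $N_R$ times its sup-norm (a standard estimate, e.g.\ via the triangle inequality applied to the finitely many "shift-like" pieces). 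This gives $\norm{m_{k-1}(T_0)} \leq N_R \, \epsilon \, \norm{T_0}$, which is the estimate needed; everything else is bookkeeping. Note we never need $k$ to be of positive type — only the Schur-norm bound $\norm{k}_S \leq C$ and the support/approximation conditions are used, which is precisely the strengthening over \cite[Proposition~11.43]{MR2007488}.
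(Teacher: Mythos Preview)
Your argument is correct and follows essentially the same route as the paper: both use the bound $\norm{m_{k-1}(a)}\leq \sup_{z}|B(z,R)|\cdot\epsilon\cdot\norm{a}$ for $a\in E_R$ to show $m_k$ approximates the identity on $C_u^*(Z)$, and then observe that $m_k$ applied to a ghost yields a finite-propagation ghost, hence a compact operator. The only cosmetic differences are that the paper organizes the approximation via a sequence $(k_n)$ (using the uniform bound $\norm{m_{k_n}}\leq C$ to pass from $E_R$ to all of $C_u^*(Z)$) and cites \cite[Theorem~3.1]{MR2148492} for the compactness of $m_{k_n}(H)$, whereas you supply the direct argument.
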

\begin{proof}
For each $n\in \N$ there exist $S_n>0$ and a Schur multiplier $k_n:Z\times Z\ra \C$ with $\norm{k_n}_S\leq C$ such that $|k_n(x,y)-1|<\frac{1}{n}$ for $d(x,y)\leq n$ and $k_n(x,y)=0$ for $d(x,y)>S_n$. From Theorem \ref{Schur multipliernorm} we note that $\sup_{x,y\in Z}|k_n(x,y)|\leq C$ for all $n\in \N$.

Let $m_{k_n}:B(\ell^2(Z))\ra B(\ell^2(Z))$ be the bounded operator associated with $k_n$. For  every $R>0$ and any $a\in E_R$, it follows that
\begin{align*}
\norm{m_{k_n}(a)-a}\leq \sup_{z\in Z}|B(z,R)|\cdot \norm{a}\cdot \sup_{\{(x,y)\in Z\times Z:\ d(x,y)\leq R\}} |k_n(x,y)-1| \ra 0,\quad \text{as $n\ra \infty$.}
\end{align*}
Since $\norm{m_{k_n}}\leq C$ for all $n\in \N$, we have in fact that $\norm{m_{k_n}(a)-a}\ra 0$ for all $a\in C_u^*(Z)$.

If $H\in C_u^*(Z)$ is a ghost operator, then each $m_{k_n}(H)$ is a compact operator (see \cite[Theorem~3.1]{MR2148492}). So $H$ is compact, as $m_{k_n}(H)\ra H$ in the operator norm.
\end{proof}
The following theorem extends \cite[Theorem~6.1]{MR2945214} to the case of locally compact groups. Given a subset $L$ of a group $G$ we define 
$\Tube (L)$ to be the set 
\[
\Tube(L):= \{(s,t) \in G \times G : s^{-1}t \in L\}.
\]
\begin{thm}\label{wa-a} 
Let $G$ be a locally compact second countable group. The following conditions are equivalent:
\begin{itemize}
\item[(1)] The group $G$ has property $A$.
\item[(2)] If there is a constant $C>0$ such that for any compact subset $K\subseteq G$ and $\epsilon>0$, there exist a compact subset $L\subseteq G$ and a (continuous) Schur multiplier $k:G\times G\rightarrow \C$ with $\norm{k}_S\leq C$ such that $\supp k\subseteq \Tube(L)$ and 
\[
\sup\{|k(s,t)-1|: (s,t)\in \Tube(K)\} <\epsilon.
\]
\end{itemize}
If $G$ is weakly amenable, then in particular the group $G$ has property $A$.
\end{thm}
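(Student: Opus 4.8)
The plan is to prove the equivalence (1)$\Leftrightarrow$(2) in Theorem~\ref{wa-a} by passing to a uniformly locally finite metric lattice and invoking Proposition~\ref{wa-ghost-compact} together with Proposition~\ref{metriclattice-propertyA}, and then to deduce the weak amenability implication by restricting a net of Herz--Schur multipliers. Throughout, fix a proper left-invariant compatible metric $d$ on $G$ and a uniformly locally finite metric lattice $(Z,d) \subseteq G$ with constant $R_0$, so that $G = \bigcup_{z\in Z} B(z,R_0)$ and $Z$ is uniformly discrete; such $Z$ exists by the discussion after the definition of metric lattice. The two directions are handled separately.

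\emph{The direction (1)$\Rightarrow$(2).} If $G$ has property $A$, then by definition there is a proper left-invariant compatible metric (coarsely equivalent to $d$) for which $(G,d)$ satisfies the positive-type-kernel condition. Given a compact $K \subseteq G$, since $d$ is proper we have $K \subseteq B(e,R)$ for some $R>0$, so $\Tube(K) \subseteq \{(s,t): d(s,t)\le R\}$ (using left-invariance, $d(s,t) = d(e,s^{-1}t)$). Apply property~$A$ with this $R$ and the given $\epsilon$ to obtain $S>0$ and a continuous positive type kernel $k$ that is $\epsilon$-close to $1$ on $d\le R$ and vanishes for $d>S$. Then $k$ is a Schur multiplier with $\norm{k}_S \le 1$ (normalized positive type kernels have Schur norm $1$, as recalled before Theorem~\ref{Schur multipliernorm}), $\supp k \subseteq \{(s,t):d(s,t)\le S\} = \Tube(\bar B(e,S))$, and $\bar B(e,S)$ is compact by properness. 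So (2) holds with $C=1$.

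\emph{The direction (2)$\Rightarrow$(1).} Here I would use the lattice $Z$. Given $R>0$ and $\epsilon>0$, take the compact set $K = \bar B(e, R + 2R_0)$ and apply the hypothesis to get a compact $L$, hence $L \subseteq \bar B(e,S)$ for some $S$, and a continuous Schur multiplier $k$ on $G\times G$ with $\norm{k}_S\le C$, $\supp k \subseteq \Tube(L)$, and $|k(s,t)-1|<\epsilon$ on $\Tube(K)$. Restrict $k$ to $Z\times Z$: the restriction is still a Schur multiplier with Schur norm $\le C$ (restriction to a subset never increases the Schur norm, by Theorem~\ref{Schur multipliernorm} applied to the restricted $\xi,\eta$). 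On $Z$, $k|_{Z\times Z}$ vanishes when $d(x,y)>S$ and is $\epsilon$-close to $1$ when $d(x,y)\le R$ (since then $(x,y)\in\Tube(K)$). Thus $Z$ satisfies the hypothesis of Proposition~\ref{wa-ghost-compact} with the same constant $C$, so $(Z,d)$ has Yu's property $A$, and then $(G,d)$ has property $A$ by Proposition~\ref{metriclattice-propertyA}(3)$\Rightarrow$(1). The main obstacle I expect is purely bookkeeping: matching the metric "ball" language of property~$A$ with the $\Tube$ language of the statement via left-invariance and properness, and making sure the lattice-to-group transfer uses the correct enlargement of $K$; there is no deep analytic content beyond Propositions~\ref{wa-ghost-compact} and \ref{metriclattice-propertyA}.

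\emph{Weak amenability implies property~$A$.} Suppose $G$ is weakly amenable with net $(\phi_i)$ of continuous compactly supported Herz--Schur multipliers, $\norm{\phi_i}_{B_2}\le C$, converging to $1$ uniformly on compact sets. Fix a compact $K$ and $\epsilon>0$; choose $i$ with $\sup_{s\in K^{-1}K}|\phi_i(s)-1|<\epsilon$ (note $K^{-1}K$ is compact), and set $k = \hat{\phi_i}$, i.e.\ $k(s,t)=\phi_i(s^{-1}t)$. By definition of $B_2$ this $k$ is a Schur multiplier with $\norm{k}_S = \norm{\phi_i}_{B_2}\le C$; it is continuous; $\supp k \subseteq \Tube(\supp\phi_i)$ with $\supp\phi_i$ compact; and for $(s,t)\in\Tube(K)$ we have $s^{-1}t\in K^{-1}K$ wait --- more simply $s^{-1}t\in K$ when $(s,t)\in\Tube(K)$, so $|k(s,t)-1|=|\phi_i(s^{-1}t)-1|<\epsilon$ provided we chose $i$ uniform on $K$. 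Hence condition (2) holds, so $G$ has property~$A$. This direction is immediate from the definitions once (1)$\Leftrightarrow$(2) is in place; the only care needed is choosing the right compact set on which to demand uniform convergence.
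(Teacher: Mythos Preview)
Your proposal is correct and follows essentially the same route as the paper: for $(2)\Rightarrow(1)$ you restrict the Schur multipliers to a uniformly locally finite metric lattice and invoke Proposition~\ref{wa-ghost-compact} followed by Proposition~\ref{metriclattice-propertyA}, exactly as the paper does (the $2R_0$ enlargement of $K$ is harmless but unnecessary, since $Z\times Z\subseteq G\times G$ already inherits the $\Tube(\bar B(e,R))$ estimate). For $(1)\Rightarrow(2)$ the paper simply cites \cite[Theorem~2.3]{MR3420532}; your direct argument via the positive type kernel is the same idea, though to get $\norm{k}_S\le 1$ you should either normalize $k$ or note that $\norm{k}_S=\sup_x k(x,x)\le 1+\epsilon$, which still yields a uniform bound.
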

\begin{proof}
$(1)\Rightarrow (2)$: It follows from \cite[Theorem~2.3]{MR3420532}. In fact, we can take $C=1$ if we assume that the positive type kernel in \cite[Theorem~2.3]{MR3420532} is normalized.

$(2)\Rightarrow (1)$:  Let $d$ be a proper left-invariant compatible metric on $G$ such that the metric space $(G,d)$ has bounded geometry. By the assumption, any uniformly locally finite metric lattice $Z$ in $(G,d)$ satisfies the conditions in Proposition \ref{wa-ghost-compact}, and hence $(Z,d)$ has Yu's property $A$. We complete the proof by applying Proposition \ref{metriclattice-propertyA}.
\end{proof}


\section{Topologically amenable actions and crossed products of $C^*$-algebras} \label{sec:topologically amenable actions}
In this section we recall some basic definitions and state a few results on topologically amenable actions and crossed products of $C^*$-algebras. We should mention that all results presented in this section are well-known to experts and we refer to \cite{MR1799683, MR1926869} for topologically amenable actions and refer to \cite[Section 2]{MR1721796} \cite[Section 7.6]{MR548006} \cite[Section 7.2]{MR2288954} for crossed products of $C^*$-algebras.

Let $\text{Prob}(G)$ denote the space of complex Radon probability measures on a locally compact group $G$. It is the state space of the $C^*$-algebra $C_0(G)$ and it carries two natural topologies: the norm topology and the weak-$*$ topology.
Recall that a locally compact group $G$ acts on a locally compact Hausdorff space $X$ if there exists a homomorphism $\alpha:G\ra \text{Homeo}(X)$ such that the map $G\times X\ra X$ given by $(g,x)\mapsto \alpha(g)(x)$ is continuous. Note that a locally compact group $G$ acts on its probability space $\text{Prob}(G)$ by
\[
(s.m)(U)= m(s^{-1}(U)), 
\]
for all $s\in G$ and any Borel subset $U$ in $G$. 
\begin{defi}[{{\cite{MR1799683}, \cite[Definition 2.1]{MR1926869}}}]
We say that the action $G\curvearrowright X$ is \emph{topologically amenable} if there exists a net $(m_i)_{i\in I}$ of weak-$*$ continuous maps $x\mapsto m_i^x$ from $X$ into the space $\text{Prob}(G)$ such that
\begin{align*}
\lim_i\norm{s.m_i^x-m_i^{s.x}}=0
\end{align*}
uniformly on compact subsets of $X\times G$.
\end{defi}
\begin{defi}[{{\cite[Definition 3.1]{MR1926869}}}]
We say that a locally compact group $G$ is \emph{amenable at infinity} if it admits a topologically amenable action on a compact Hausdorff space $X$.
\end{defi}
We omit the proof of the following lemma as it follows directly from the above definition.
\begin{lem}
Let $X$ and $Y$ be compact Hausdorff $G$-spaces. Assume that there exists a continuous $G$-equivariant map $f:X\ra Y$. If the action $G\curvearrowright Y$ is topologically amenable,  then so is the action $G\curvearrowright X$.
\end{lem}
If a locally compact group $G$ is amenable at infinity, then there are some canonical choices of compact spaces equipped with a topologically amenable action of the group $G$. For instance, let us denote by $C_b^{lu}(G)$ the $C^*$-algebra of bounded left-uniformly continuous functions on $G$. Let $\beta^{lu} (G)$ be the spectrum of $C_b^{lu}(G)$ and it is the universal compact Hausdorff left $G$-space equipped with a continuous $G$-equivariant inclusion of $G$ as an open dense subspace. Let $\partial G:=\beta^{lu} (G) \backslash G$ denote the boundary of the group $G$. It is also a compact Hausdorff space and the left translation action of $G$ on $\beta^{lu} (G)$ restricts to an action on $\partial G$. The inclusion map from $\partial G$ into $\beta^{lu} (G)$ is clearly equivariant. We obtain the following result from the lemma stated above.
\begin{prop}[{{\cite[Proposition~3.4]{MR1926869}}}]\label{amenat_infinity}
Let $G$ be a locally compact group. The following conditions are equivalent:
\begin{itemize}
\item[(1)] $G$ is amenable at infinity.
\item[(2)] The left translation action of $G$ on $\beta^{lu} (G)$ is topologically amenable.
\item[(3)] The left translation action of $G$ on $\partial G$ is topologically amenable.
\end{itemize}
\end{prop}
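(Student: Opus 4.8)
The statement to prove is Proposition~\ref{amenat_infinity}, which asserts the equivalence of three conditions: $G$ amenable at infinity, the left translation action on $\beta^{lu}(G)$ being topologically amenable, and the left translation action on $\partial G$ being topologically amenable.

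The plan is to exploit the preceding lemma on functoriality of topological amenability under $G$-equivariant continuous maps, together with the universal property of $\beta^{lu}(G)$. First I would observe that $(2)\Rightarrow(1)$ is immediate: $\beta^{lu}(G)$ is by definition a compact Hausdorff $G$-space, so if the action on it is topologically amenable, then $G$ admits a topologically amenable action on a compact Hausdorff space, which is exactly the definition of amenability at infinity. Similarly $(3)\Rightarrow(1)$ follows because $\partial G$ is compact Hausdorff and carries the restricted $G$-action.

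For $(1)\Rightarrow(2)$, suppose $G\acts X$ is topologically amenable with $X$ compact Hausdorff. The key point is the universal property of $\beta^{lu}(G)$: it is the universal compact Hausdorff left $G$-space containing $G$ as an open dense $G$-equivariant subspace. I would fix a base point and use it to produce a $G$-equivariant continuous map $\beta^{lu}(G)\to X$. Concretely, pick any $x_0\in X$; the orbit map $g\mapsto g\cdot x_0$ from $G$ to $X$ is continuous and bounded, and pulling back $C(X)$ along it lands inside $C_b^{lu}(G)$ because the action $G\acts X$ has the appropriate uniform continuity (left-uniform continuity of the matrix coefficients), which gives a unital $*$-homomorphism $C(X)\to C_b^{lu}(G)$ and hence, by Gelfand duality, a continuous map $\beta^{lu}(G)\to X$ that is $G$-equivariant by construction. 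Then the lemma above gives that $G\acts\beta^{lu}(G)$ is topologically amenable.

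Finally, for $(2)\Leftrightarrow(3)$, the inclusion $\partial G\hookrightarrow\beta^{lu}(G)$ is $G$-equivariant and continuous, so the lemma immediately yields $(2)\Rightarrow(3)$. For the converse $(3)\Rightarrow(2)$, I would argue that topological amenability of the $G$-action on the dense open orbit part $G\subseteq\beta^{lu}(G)$ is automatic (since $G$ acting on itself by left translation is trivially topologically amenable, taking $m_i^x$ supported near $x$ via an approximate identity argument, or simply using $\delta_{x}$-type measures translated appropriately), and then one patches together the amenating nets on $G$ and on $\partial G$ using a partition-of-unity / interpolation argument to get one on all of $\beta^{lu}(G)$; alternatively, and more cleanly, one cites that for a compact $G$-space, topological amenability is equivalent to topological amenability of a closed invariant subspace together with that of the complement's closure when the complement is open and $G$-invariant — but the simplest route is that $\partial G$ being a topologically amenable $G$-space and $\beta^{lu}(G)$ mapping onto the one-point-type quotient does not directly help, so the patching argument is the honest one. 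I expect the main obstacle to be this last step, the passage from amenability of the boundary $\partial G$ to amenability of the whole compactification $\beta^{lu}(G)$: one must carefully combine the trivial amenating net on the open dense copy of $G$ with the given net on the closed boundary, controlling the uniform convergence on compact subsets of $\beta^{lu}(G)\times G$ near the "seam" where $G$ meets $\partial G$; this is where a careful use of Urysohn functions separating a neighborhood of $\partial G$ from compact subsets of $G$ is needed. Since the excerpt attributes the proposition to \cite[Proposition~3.4]{MR1926869}, I would in practice simply cite that reference for this delicate point rather than reproduce the patching argument in full.
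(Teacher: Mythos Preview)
Your treatment of $(2)\Rightarrow(1)$, $(3)\Rightarrow(1)$, $(1)\Rightarrow(2)$ (via universality of $\beta^{lu}(G)$ plus the functoriality lemma), and $(2)\Rightarrow(3)$ (via the equivariant inclusion $\partial G\hookrightarrow\beta^{lu}(G)$ plus the lemma) is correct and is exactly the paper's approach: the paper simply states that the proposition follows from the lemma together with the universal property of $\beta^{lu}(G)$ and the equivariance of the inclusion.

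The only misstep is in your handling of $(3)\Rightarrow(2)$. You propose a patching argument, combining the trivially amenable action on the open orbit $G$ with the assumed amenable action on $\partial G$, and you correctly flag this as delicate. But this is entirely unnecessary: you have already established $(3)\Rightarrow(1)$ (trivially, since $\partial G$ is compact Hausdorff) and $(1)\Rightarrow(2)$ (via universality). Composing these gives $(3)\Rightarrow(2)$ immediately, with no gluing near the boundary required. The paper's implicit proof runs the cycle $(1)\Rightarrow(2)\Rightarrow(3)\Rightarrow(1)$ in exactly this way, so the ``main obstacle'' you anticipate simply does not arise.
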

A $G$-$C^*$-algebra consists of a $C^*$-algebra $A$, a locally compact group $G$, and a group homomorphism $\alpha \colon G \to \mathrm{Aut}(A)$ such that the map $g \mapsto \alpha_g(a)$ is continuous for all $a \in A$.

\begin{defi}
Let $G$ be a locally compact group and $A$ be a $G$-$C^*$-algebra equipped with the action $\alpha$. A \emph{covariant representation} of the $G$-$C^*$-algebra $A$ is a pair $(\pi,U)$ where $\pi:A\ra B(H)$ is a $*$-homomorphism and $U:G\ra B(H)$ is a unitary representation of $G$ such that $U_s\pi(a)U_s^*=\pi(\alpha_s(a))$ for all $s\in G$ and $a\in A$.
\end{defi}
We denote by $C_c(G,A)$ the vector space of continuous $A$-valued functions on $G$ with compact support. Define a convolution product and involution on $C_c(G,A)$ by 
\begin{align*}
f*g(s)=\int_G f(t)\alpha_t(g(t^{-1}s))\ d\mu(t) & &\text{and}& & f^*(s)=\frac{\alpha_s(f(s^{-1})^*)}{\Delta(s)},
\end{align*}
where $\Delta$ is the modular function and $\mu$ is a fixed left Haar measure on $G$, respectively. In this way, $C_c(G,A)$ becomes a $*$-algebra.

Given a covariant representation $(\pi,U)$ of a $G$-$C^*$-algebra $A$ on a Hilbert space $H$. Then
\begin{align*}
\pi\rtimes U(f)=\int_G\pi(f(s))U_s\ d\mu(s)
\end{align*}
defines a $*$-representation of $C_c(G,A)$ on the Hilbert space $H$. If $\pi \rtimes U$ is faithful then we define a norm on $C_c(G,A)$ by $\norm{f}_{(\pi,U)}: = \norm{\pi \rtimes U (f)}$. The completion of $C_c(G,A)$ with respect to this norm is denoted by $A \rtimes_{(\pi,U)} G$.
\begin{defi}
Let $G$ be a locally compact group and $A$ be a $G$-$C^*$-algebra equipped with the action $\alpha$ of $G$. The \emph{full crossed product} $A\rtimes_\alpha G$ is the completion of $C_c(G,A)$ with respect to the universal $C^*$-norm $\norm{\cdot}_u$ given by
\begin{align*}
\norm{f}_u:=\sup\{ \norm{\pi\rtimes U(f)}: \text{$(\pi,U)$ is a covariant represention of $A$}\}.
\end{align*}
\end{defi}
It follows from the definition of the full crossed product that $\pi\rtimes U$ extends to a  $*$-representation of $A\rtimes_\alpha G$ for every covariant representation $(\pi,U)$ of a $G$-$C^*$-algebra $A$.

To define the reduced crossed product, we begin with a \emph{faithful} $*$-representation $\pi$ of the $G$-$C^*$-algebra $A$ on a Hilbert space $H$. Define a $*$-representation $\pi_\alpha^A$ of $A$ on $L^2(G,H)$ by
\begin{align*}
(\pi_\alpha^A(a)\xi)(t)=\pi(\alpha_{t^{-1}}(a))\xi(t),
\end{align*}
for $a\in A$, $t\in G$ and $\xi\in L^2(G,H)$. Let $\lambda$ denote the left regular representation of $G$ on $L^2(G)$. Then $(\pi_\alpha^A, \lambda\otimes 1)$ is a covariant representation of $A$. The \emph{regular representation} $\pi_\alpha^A\rtimes (\lambda\otimes 1)$ of $C_c(G,A)$ on $L^2(G,H)$ is faithful. In particular, the universal $C^*$-norm $\norm{\cdot}_u$ is an honest norm.
\begin{defi}
Let $G$ be a locally compact group and $A$ be a $G$-$C^*$-algebra equipped with the action $\alpha$ of $G$. The \emph{reduced crossed product} $A\rtimes_{\alpha,r}G$ is the completion of $C_c(G,A)$ with respect to the reduced $C^*$-norm $\norm{\cdot}_r$ given by
\begin{align*}
\norm{f}_r:=\norm{\pi_\alpha^A\rtimes (\lambda\otimes 1)(f)}.
\end{align*}
\end{defi}
It is well-known that the reduced crossed product $A\rtimes_{\alpha,r}G$ does not depend on the choice of the faithful representation $\pi:A\ra B(H)$ (see \cite[Section 2]{MR1721796}). Moreover, we have a natural surjective $*$-homomorphism $A\rtimes_{\alpha}G\ra A\rtimes_{\alpha,r}G$.

For a given locally compact group $G$, the full crossed product $(-)\rtimes G$ and the reduced crossed product $(-)\rtimes_r G$ form functors from the category of $G$-$C^*$-algebras to the category of $C^*$-algebras. Indeed, let $A$ and $B$ be $G$-$C^*$-algebras with actions $\alpha$ and $\beta$, respectively. For every equivariant $*$-homomorphism $\theta:A\ra B$ there are two canonical $*$-homomorphisms 
\[
\theta_u: A\rtimes_\alpha G\ra B\rtimes_\beta G \quad \text{and}\quad \theta_r:A\rtimes_{\alpha,r} G\ra B\rtimes_{\beta,r} G.
\]
The two $*$-homomorphisms are induced by the map on the dense $*$-subalgebras $C_c(G,A) \to C_c(G,B)$ defined by $f \mapsto \theta \circ f$.
\begin{exem}
Let $C_b(G)$ be the space of bounded continuous complex valued functions on a locally compact group $G$. Let $M: C_b(G)\ra B(L^2(G))$ be the multiplication operator on $L^2(G)$ given by
\begin{align*}
(M(f)\xi)(x)=f(x)\xi(x),
\end{align*}
where $f\in C_b(G)$, $\xi\in L^2(G)$ and $x\in G$. It is clear that $M$ is a faithful $*$-representation. Let $L$ and $R$ be the left and right translations on $C_b(G)$, respectively. More precisely,
\[
(L_gf)(x)=f(g^{-1}x)\quad \text{and} \quad (R_gf)(x)=f(xg),
\]
for $f\in C_b(G)$ and $x,g\in G$.
We denote the space of bounded left (right) uniformly continuous functions on $G$ by $C_b^{lu}(G)$ (respectively $C_b^{ru}(G)$). We have the left and right regular representations $\lambda, \rho \colon G \to U(L^2(G))$ given by
\[
(\lambda_g\xi)(x) = \xi(g^{-1}x)\quad  \text{and} \quad (\rho_g \xi)(x) = \xi(xg) \Delta(g)^{1/2},
\]
where $\xi \in L^2(G)$ and $g,x \in G$. It follows that $(M, \lambda)$ and $(M, \rho)$ are covariant representations of the $C^*$-dynamical systems $(C_b^{lu}(G), G, L)$ and $(C_b^{ru}(G), G, R)$, respectively.
\end{exem}
For $f \in C_c(G,C_b(G))$, $f_g$ denotes the continuous bounded function of $f$ at the point $g \in G$ and $f_g(h)$ the value of $f_g$ at the point $h \in G$. Strictly speaking $C_b^{lu}(G) \rtimes_{L,r} G$ and $C_b^{lu}(G) \rtimes_{(M, \lambda)} G$ are represented on two different Hilbert spaces. However, we include a proof of the following standard result. 
\begin{prop}
In the notation introduced above, we have the following $*$-isomorphisms: 
\[
C_b^{ru}(G) \rtimes_{(M, \rho)} G \cong C_b^{lu}(G) \rtimes_{(M, \lambda)} G \cong C_b^{lu}(G) \rtimes_{L,r} G \cong C^{ru}_b(G) \rtimes_{R,r} G.
\]
\end{prop}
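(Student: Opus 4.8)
The plan is to establish the three isomorphisms by exhibiting explicit unitaries that intertwine the relevant covariant representations, so that each isomorphism is implemented by conjugation on the level of $C_c(G,-)$ and then extends to the completions. First I would deal with $C_b^{lu}(G) \rtimes_{(M,\lambda)} G \cong C_b^{lu}(G) \rtimes_{L,r} G$: by definition the reduced crossed product is built from the faithful representation $\pi = M$ on $L^2(G)$, amplified to $L^2(G, L^2(G)) \cong L^2(G \times G)$ via $(\pi_L^{C_b^{lu}(G)}(f)\xi)(t) = M(L_{t^{-1}} f)\xi(t)$ together with $\lambda \otimes 1$. The claim is that $\pi_L^{C_b^{lu}(G)} \rtimes (\lambda \otimes 1)$ on $L^2(G\times G)$ is unitarily equivalent to $(M \rtimes \lambda)$ amplified trivially, i.e. $(M \otimes 1) \rtimes (\lambda \otimes 1)$. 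The standard device is the unitary $W$ on $L^2(G \times G)$ given by $(W\xi)(s,t) = \xi(ts, t)$ (a pointwise translation in the first variable parametrized by the second), whose inverse is $(W^*\xi)(s,t) = \xi(t^{-1}s, t)$; one checks $W^*(M(f) \otimes 1)W = \pi_L^{C_b^{lu}(G)}(f)$ for $f \in C_b^{lu}(G)$ and $W^*(\lambda_g \otimes 1)W = \lambda_g \otimes 1$. Both identities are short changes of variables in the integral/pointwise formulas, using left uniform continuity of $f$ only to guarantee that $t \mapsto L_{t^{-1}}f$ is norm-continuous so that everything is well-defined; I would state them as a lemma and leave the verification to the reader as "routine."

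Next I would handle the passage between the left and right sides, i.e. $C_b^{ru}(G) \rtimes_{(M,\rho)} G \cong C_b^{lu}(G) \rtimes_{(M,\lambda)} G$ and $C_b^{lu}(G) \rtimes_{L,r} G \cong C_b^{ru}(G) \rtimes_{R,r} G$. The key observation is that the inversion map $\iota\colon G \to G$, $\iota(g) = g^{-1}$, carries $C_b^{lu}(G)$ onto $C_b^{ru}(G)$ (since $(L_g f)\circ \iota = R_{g^{-1}}(f \circ \iota)$ up to the obvious bookkeeping), and induces an equivariant $*$-isomorphism between the dynamical systems $(C_b^{lu}(G), G, L)$ and $(C_b^{ru}(G), G, R)$ after composing with the automorphism $g \mapsto g^{-1}$ of $G$. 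On the Hilbert space side, the unitary $(J\xi)(x) = \xi(x^{-1})\Delta(x)^{-1/2}$ on $L^2(G)$ implements $JM(f)J^* = M(f\circ\iota)$, $J\lambda_g J^* = \rho_g$, and it intertwines $f^* \mapsto f^*$ at the level of the modular function $\Delta$; combined with $f \mapsto f \circ \iota$ on functions and $g \mapsto g^{-1}$ on the group variable, this gives a $*$-isomorphism of the $*$-algebras $C_c(G, C_b^{lu}(G)) \to C_c(G, C_b^{ru}(G))$ compatible with the regular representations, hence extending to the reduced crossed products. I expect the bookkeeping with $\Delta$ in the definitions of $f^*$, of $\rho$, and of $J$ to be the most error-prone part, though conceptually straightforward; I would carry the modular function through carefully in one place and then suppress it.

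Finally, assembling the chain: combine the first lemma with its mirror image $C_b^{ru}(G) \rtimes_{(M,\rho)} G \cong C_b^{ru}(G) \rtimes_{R,r} G$ (proved identically, with the roles of $\lambda$ and $\rho$ and of left/right uniform continuity swapped, using the unitary $(W'\xi)(s,t) = \xi(st,t)$ or its variant adjusted by $\Delta$), and insert the inversion isomorphism in the middle. Each arrow is a $*$-isomorphism because it is conjugation by a unitary composed with a $*$-isomorphism of the underlying dynamical system, and conjugation by a unitary is isometric for the reduced norm — which is the content of the statement (recalled in the excerpt) that the reduced crossed product is independent of the faithful representation chosen. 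The only genuine subtlety, and the step I would flag as the main obstacle, is checking that the unitaries $W$, $W'$, $J$ are well-defined bounded operators and that the intertwining relations hold on all of $C_c(G, C_b^{lu}(G))$ rather than just on a dense subspace: this is exactly where left (resp. right) uniform continuity of the coefficient functions is essential — without it the maps $t \mapsto L_{t^{-1}}f$ need not be continuous and $\pi_L^{C_b^{lu}(G)}(f)$ need not even lie in the multiplier algebra of the crossed product — so I would make that hypothesis explicit at each use.
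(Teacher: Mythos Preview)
Your approach is correct and essentially the same as the paper's: the paper uses exactly your inversion unitary $J$ (called $U$ there, $(U\xi)(g)=\xi(g^{-1})\Delta(g^{-1})^{1/2}$) together with $\psi(f)(g)=f(g^{-1})$ for the left/right passage, and a unitary $V_2$ on $L^2(G,L^2(G))$ playing the role of your $W$ (preceded by an isometric embedding $V_1\colon L^2(G)\hookrightarrow L^2(G,L^2(G))$) for the identification of $\rtimes_{(M,\lambda)}$ with $\rtimes_{L,r}$. One small slip to fix when you write it up: $g\mapsto g^{-1}$ is not a group automorphism, but you do not need it to be---$\psi$ is already $G$-equivariant from $(C_b^{ru}(G),R)$ to $(C_b^{lu}(G),L)$ without any twist, since $\psi(R_g f)=L_g\psi(f)$.
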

\begin{proof}
 Let $\psi$ be the $G$-equivarant $*$-isomorphism $\psi \colon C_b^{ru}(G) \to C_b^{lu}(G)$, $\psi(f)(g) = f(g^{-1})$. Let $\bar{\psi}$ be the extension of $\psi$ to 
 \[
 \bar{\psi}: C_c(G,C^{ru}_b(G)) \to C_c(G,C^{lu}_b(G))
 \] 
 where $\bar{\psi}(f)_s(g) = f_s(g^{-1})$. Let $U \colon L^2(G) \to L^2(G)$, $(U \xi)(g) = \xi(g^{-1}) \Delta(g^{-1})^{1/2}$ for all $\xi \in L^2(G)$ and $x \in G$. Then the isomorphism 
 \[
 C_b^{ru}(G) \rtimes_{(M, \rho)} G \cong C_b^{lu}(G) \rtimes_{(M, \lambda)} G
 \]
 is given by the map $f \mapsto U \psi(f) U^*$ for all $f \in C_c(G,C^{ru}_b(G))$.

Now fix a unit vector $\eta_0 \in L^2(G)$ and define a map
\[
V_1 \colon L^2(G) \to L^2(G,L^2(G))
\]
by 
\[
(V_1 \xi)_g(x) = \xi(g) \eta_0(x)
\]
 for all $\xi \in L^2(G)$. Similarly, we define
 \[
 V_2\colon L^2(G,L^2(G)) \to L^2(G,L^2(G)); \quad (V_2 \xi)_g(x) = \xi_{gx^{-1}} \Delta(x^{-1})^{1/2}. 
 \]
 Finally, let $V = V_2^* V_1$. Then there is an isomorphism 
 \[
 C_b^{lu}(G) \rtimes_{(M, \lambda)} G \cong C_b^{lu}(G) \rtimes_{L,r} G
 \]
 given  by $f \mapsto V f V^*$ for all $f \in C_b^{lu}(G) \rtimes_{(M, \lambda)} G$. Let $\theta = \psi^{-1}$. Then $\theta$ is an isomorphism and extends to a $*$-isomorphism 
\[
\theta_r \colon C^{lu}_b(G) \rtimes_{L,r} G \to C^{ru}_b(G) \rtimes_{R,r} G. \qedhere
\]
\end{proof}

From this point on we shall use the isomorphisms $C_b^{lu}(G) \rtimes_{(M, \lambda)} G \cong C_b^{lu}(G) \rtimes_{L,r} G$ and $C_b^{ru}(G) \rtimes_{(M, \rho)} G \cong C_b^{ru}(G) \rtimes_{R,r} G$ without further mention. Moreover, $M\rtimes_r \rho$ induces a $*$-isomorphism between $C_0(G)\rtimes_{R,r} G$ and the $C^*$-algebra of compact operators $K(L^2(G))$  \cite[Theorem 4.24]{MR2288954}. We can also conclude the same facts for $(M, \lambda)$ on $(C_b^{lu}(G), G, L)$.
%
%


We end this section with an important theorem, which will be used in the next section. 
\begin{thm}[{{\cite{MR919508} \cite[Theorem 5.3 and Theorem 5.8]{MR1926869}\label{ADtop-dyn}}}]
Let $G$ be a locally compact group and $X$ be a locally compact Hausdorff $G$-space. Consider the following conditions:
\begin{itemize}
\item[(1)] The action of $G$ on $X$ is topologically amenable.
\item[(2)] $(C_0(X)\otimes A)\rtimes G=(C_0(X)\otimes A)\rtimes_r G$ for every $G$-$C^*$-algebra $A$.
\item[(3)] $C_0(X)\rtimes_r G$ is nuclear.
\end{itemize}
Then $(1)\Rightarrow (2)\Rightarrow (3)$. Moreover, $(3)\Rightarrow (1)$ if $G$ is discrete.
\end{thm}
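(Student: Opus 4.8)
The plan is to prove the three implications separately, following Anantharaman-Delaroche \cite{MR919508, MR1926869}; each is a dynamical-system analogue of a classical fact --- Hulanicki's theorem for $(1)\Rightarrow(2)$, and the completely positive approximation characterisation of nuclearity for the remaining two. For $(1)\Rightarrow(2)$ I would run a F{\o}lner-type argument. Topological amenability of $G\acts X$ provides a net $(m_i)$ of weak-$*$ continuous maps $x\mapsto m_i^x\in\mathrm{Prob}(G)$ with $\norm{s.m_i^x-m_i^{s.x}}\to0$ uniformly on compact subsets of $X\times G$; after a standard regularisation (convolving with approximate units of $C_c(G)$) these may be taken to have densities $g_i(x,\cdot)$ whose square roots $h_i:=g_i^{1/2}$ define elements of (the multiplier algebra of) $C_0(X)\rtimes G$. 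For an arbitrary $G$-$C^*$-algebra $A$ the $h_i$ yield contractive completely positive maps $T_i$ on $C_c(G,C_0(X)\otimes A)$, obtained by compressing by $h_i$, which (a) factor as $T_i=S_i\circ\iota$ through the canonical surjection $\iota\colon(C_0(X)\otimes A)\rtimes G\to(C_0(X)\otimes A)\rtimes_r G$ with each $S_i$ contractive, and (b) converge to the identity pointwise on $C_c(G,C_0(X)\otimes A)$. Then $\norm{f}_u=\lim_i\norm{T_if}_u\le\limsup_i\norm{S_i}\,\norm{f}_r\le\norm{f}_r$ for $f\in C_c(G,C_0(X)\otimes A)$, so the full and reduced norms coincide and $(C_0(X)\otimes A)\rtimes G=(C_0(X)\otimes A)\rtimes_r G$.

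For $(2)\Rightarrow(3)$, put $B=C_0(X)\rtimes_r G$ and $B'=C_0(X)\rtimes G$, and establish nuclearity of $B$ by checking $B\otimes_{\max}D=B\otimes_{\min}D$ for an arbitrary $C^*$-algebra $D$. Regard $D$ as a $G$-$C^*$-algebra with trivial action; since $C_0(X)$ is nuclear the tensor norm on $C_0(X)\otimes D$ is unambiguous, and hypothesis (2) applied with $A=D$ gives $(C_0(X)\otimes D)\rtimes G=(C_0(X)\otimes D)\rtimes_r G$. The universal property of the full crossed product identifies $(C_0(X)\otimes D)\rtimes G\cong B'\otimes_{\max}D$: a covariant representation of $C_0(X)\otimes D$, with $G$ acting only on the first factor, is precisely a covariant representation of the $C_0(X)$-system together with a commuting representation of $D$. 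Representing all the algebras spatially on $L^2(G,H_X)\otimes H_D$ and using triviality of the action on $D$ identifies $(C_0(X)\otimes D)\rtimes_r G\cong B\otimes_{\min}D$. Under these identifications the canonical quotient map becomes the composite $B'\otimes_{\max}D\twoheadrightarrow B\otimes_{\max}D\twoheadrightarrow B\otimes_{\min}D$; hypothesis (2) makes it an isomorphism, hence both factors are isomorphisms, and in particular $B\otimes_{\max}D\cong B\otimes_{\min}D$. As $D$ was arbitrary, $B$ is nuclear.

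For $(3)\Rightarrow(1)$ with $G$ discrete, I would follow Anantharaman-Delaroche and Ozawa \cite{MR1926869}: nuclearity of $C_0(X)\rtimes_r G$ yields a net of completely positive contractions into matrix algebras approximating the identity pointwise; composing with the canonical conditional expectation $E\colon C_0(X)\rtimes_r G\to C_0(X)$ and evaluating against the canonical unitaries $u_s$ ($s\in G$) extracts from this data a net of finitely supported positive-type kernels $X\times G\to\C$ that witnesses topological amenability of $G\acts X$. I expect this implication to be the genuine obstacle: recovering an almost-equivariant field of probability measures from an abstract completely positive approximation uses both the conditional expectation onto $C_0(X)$ and truncation to finite support in the group variable, and no substitute for this is known when $G$ is not discrete --- which is exactly why the statement is restricted to discrete groups there.
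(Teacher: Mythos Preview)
The paper does not give its own proof of this theorem: it is quoted from the literature, with attribution to \cite{MR919508} and \cite[Theorem~5.3 and Theorem~5.8]{MR1926869}, and is used later as a black box in the proof of Theorem~\ref{thm:amenable at infinity and exactness}. There is therefore nothing in the paper to compare your argument against directly.

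That said, your sketch is faithful to the proofs in the cited sources. For $(1)\Rightarrow(2)$ the approximate-invariance maps $m_i$ are indeed used to build a net of completely positive multipliers converging to the identity and factoring through the regular representation, exactly as in Anantharaman-Delaroche's proof; your description of this is accurate at the level of strategy. For $(2)\Rightarrow(3)$ the identification $(C_0(X)\otimes D)\rtimes G\cong (C_0(X)\rtimes G)\otimes_{\max}D$ and $(C_0(X)\otimes D)\rtimes_r G\cong (C_0(X)\rtimes_r G)\otimes_{\min}D$ for $D$ with trivial $G$-action is the standard route, and your conclusion is correct. For $(3)\Rightarrow(1)$ in the discrete case your outline---CPAP through matrix algebras, conditional expectation onto $C_0(X)$, reading off finitely supported positive-type functions in the group variable---is precisely the mechanism used in \cite{MR1926869}, and your remark that discreteness is genuinely needed here (for the conditional expectation and finite truncation) is to the point.
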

\begin{rem}
In the theorem above, the condition (3) does not imply the condition (1) in general. For example, the reduced group $C^*$-algebra $C_r^*(G)$ is always nuclear if the group $G$ is connected \cite[Corollary 6.9(c)]{MR0454659} or Type I \cite[Section 2]{MR974319}.
\end{rem}
\section{Exactness of locally compact groups} \label{sec:exactness of locally compact groups}
In this section we show that exactness of a locally compact second countable group is equivalent to amenability at infinity. This solves an open problem raised by Anantharaman-Delaroche in \cite[Problem 9.3]{MR1926869}. 

We would like to point out that a locally compact group that is amenable at infinity is known to be exact \cite[Theorem 7.2]{MR1926869}. The converse implication is well-established for discrete groups \cite{MR1721796,MR1763912,MR1926869}. The situation in the locally compact case is quite different from the discrete case as the existing proofs do not translate directly to the locally compact case. Let us begin this section by recalling the definition of exact groups:
\begin{defi}[{{\cite{MR1721796}}}]
We say that a locally compact group $G$ is \emph{exact}, if the reduced crossed product functor $A\ra A\rtimes_{r}G$ is exact for any $G$-$C^*$-algebra $A$. To be more precise, for every $G$-equivariant short exact sequence of $G$-$C^*$-algebras $0\ra I\ra A\ra A/I\ra 0$, the corresponding sequence $$0\ra I\rtimes_{r}G\ra A\rtimes_{r}G\ra A/I\rtimes_{r}G\ra 0$$ of reduced crossed products is still exact.
\end{defi}
\begin{rem}
The corresponding morphisms $\iota_r: I\rtimes_{r}G\ra A\rtimes_{r}G$ and $q_r: A\rtimes_{r}G\ra A/I\rtimes_{r}G$ are always injective and surjective, respectively. Moreover, $\Im\ \iota_r \subseteq \ker q_r$. So the group $G$ is exact if and only if this inclusion is  an equality for every $G$-$C^*$-algebra $A$ and every $G$-invariant closed ideal $I$ in $A$. However, the full crossed product functor is \emph{always} exact by its universal property.
\end{rem}
Exact groups include all locally compact amenable groups \cite[Proposition 6.1]{MR1725812}, almost connected groups \cite[Corollary 6.9]{MR1725812}, countable linear groups \cite[Theorem 6]{MR2217050}, word hyperbolic groups \cite[Theorem 5.1]{MR1293309}, and so on. Nonetheless, there exist finitely generated non-exact groups \cite{MR1978492, AD, Osa}.

We now identify all elements in $A\rtimes_r G$ which are in $\ker q_r$. The next proposition provides a useful criterion for this in terms of slice maps. Recall that for any normal linear functional $\psi\in B(L^2(G))_*$, the slice map $S_\psi$ corresponding to $\psi$ is the bounded map defined as follows
\begin{align*}
S_\psi: A\rtimes_{\alpha,r} G\xrightarrow{\pi^A_\alpha\rtimes(\lambda\otimes 1)} B(L^2(G,H))\cong B(H)\bar{\otimes} B(L^2(G))\xrightarrow{\mathrm{id}_{B(H)}\bar{\otimes}\psi} B(H),
\end{align*}
where $\pi_\alpha^A\rtimes (\lambda\otimes 1)$ is the regular representation associated to the reduced crossed product $A\rtimes_{\alpha,r} G$. If $\psi=\omega_{\xi,\eta}$, where $\xi,\eta \in C_c(G)$ and $\omega_{\xi,\eta}(x)=\ip{x\xi}{\eta}$ for $x\in B(L^2(G))$, then it follows from the proof of \cite[Lemma 2.1]{MR1721796} that
\begin{align}\label{sliceformula}
S_\psi(f)=\int_G\int_G\xi(g^{-1}h)\bar{\eta(h)}\alpha_{h^{-1}}(f(g)) \ d\mu(g)d\mu(h),\quad f\in C_c(G,A).
\end{align}

\begin{prop}[{{\cite[Proposition~2.2]{MR1721796}}}]\label{slicemap}
Let $0 \to I \to A \to A/I \to 0$ be a $G$-equivariant exact sequence of $G$-$C^*$-algebras. For $x\in A\rtimes_r G$, the following are equivalent:
\begin{itemize}
\item[(1)] $x\in \ker q_r$.
\item[(2)] $S_\psi(x)\in I$ for all $\psi\in B(L^2(G))_*$.
\item[(3)] $S_{\omega_{\xi,\eta}}(x)\in I$ for all $\xi,\eta \in C_c(G)$.
\end{itemize}
\end{prop}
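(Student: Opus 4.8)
The plan is to prove the three implications $(1)\Rightarrow(3)\Rightarrow(2)\Rightarrow(1)$ (together with the trivial $(2)\Rightarrow(3)$ this yields all the stated equivalences), keeping throughout a fixed faithful representation $\pi\colon A\to B(H)$, writing $\alpha$ for the $G$-action on $A$, $\bar\alpha$ for the induced action on $A/I$, and $q\colon A\to A/I$ for the ($G$-equivariant) quotient $*$-homomorphism. First I would record that the slice maps genuinely land in $A$: formula \eqref{sliceformula} exhibits $S_{\omega_{\xi,\eta}}(f)$ for $f\in C_c(G,A)$ as the Bochner integral of a continuous compactly supported $A$-valued function, hence as an element of $A$; since $S_{\omega_{\xi,\eta}}$ is norm-bounded and $C_c(G,A)$ is dense in $A\rtimes_{\alpha,r}G$, it follows that $S_{\omega_{\xi,\eta}}(A\rtimes_{\alpha,r}G)\subseteq A$, and (via the density step below) the same holds for $S_\psi$ with arbitrary $\psi\in B(L^2(G))_*$, so the conditions ``$S_\psi(x)\in I$'' make sense literally.

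For $(1)\Rightarrow(3)$ I would apply $q$ inside the double integral of \eqref{sliceformula} and use the $G$-equivariance of $q$ to obtain $q\circ S^A_{\omega_{\xi,\eta}}=S^{A/I}_{\omega_{\xi,\eta}}\circ q_r$ first on $C_c(G,A)$ and then, by boundedness of all the maps involved, on all of $A\rtimes_{\alpha,r}G$; if $x\in\ker q_r$ this gives $q\bigl(S^A_{\omega_{\xi,\eta}}(x)\bigr)=0$, i.e.\ $S^A_{\omega_{\xi,\eta}}(x)\in\ker q=I$. For $(3)\Rightarrow(2)$ I would use that the functionals $\omega_{\xi,\eta}$ with $\xi,\eta\in C_c(G)$ have dense linear span in $B(L^2(G))_*$: finite-rank operators are trace-norm dense, and $\|\omega_{\xi,\eta}-\omega_{\xi',\eta'}\|\le\|\xi-\xi'\|_2\|\eta\|_2+\|\xi'\|_2\|\eta-\eta'\|_2$ together with density of $C_c(G)$ in $L^2(G)$ handles a general rank-one functional. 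Since $\psi\mapsto S_\psi(x)$ is linear with $\|S_\psi(x)\|\le\|\psi\|\,\|x\|$ and $I$ is norm-closed in $A$, condition $(3)$ forces $S_\psi(x)\in I$ for every $\psi\in B(L^2(G))_*$.

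The substantive implication is $(2)\Rightarrow(1)$. Here I would fix a faithful representation $\sigma\colon A/I\to B(H_0)$, put $\rho:=\sigma\circ q\colon A\to B(H_0)$ (so $\ker\rho=I$), and form the representation $\rho_\alpha$ of $A$ on $L^2(G,H_0)$ given by $(\rho_\alpha(a)\zeta)(t)=\rho(\alpha_{t^{-1}}(a))\zeta(t)$. Equivariance of $q$ shows $\rho_\alpha(a)=\sigma_{\bar\alpha}(q(a))$ for $a\in A$, whence on $C_c(G,A)$ one has $\rho_\alpha\rtimes(\lambda\otimes1)=\bigl(\sigma_{\bar\alpha}\rtimes(\lambda\otimes1)\bigr)\circ q_r$; since $\sigma_{\bar\alpha}\rtimes(\lambda\otimes1)$ is the faithful regular representation of $(A/I)\rtimes_{\bar\alpha,r}G$, this gives $q_r(x)=0\iff\bigl(\rho_\alpha\rtimes(\lambda\otimes1)\bigr)(x)=0$. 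I would then evaluate $\rho_\alpha\rtimes(\lambda\otimes1)$ against vectors of the form $\zeta(t)=\xi(t)v$, $\zeta'(t)=\eta(t)w$ with $\xi,\eta\in C_c(G)$ and $v,w\in H_0$, and recognise --- using \eqref{sliceformula} and pulling the contractive map $\rho$ through the $A$-valued integral --- the identity
\[
\bigl\langle\bigl(\rho_\alpha\rtimes(\lambda\otimes1)\bigr)(f)\,\zeta,\zeta'\bigr\rangle=\bigl\langle\rho\bigl(S_{\omega_{\xi,\eta}}(f)\bigr)v,w\bigr\rangle,\qquad f\in C_c(G,A),
\]
which extends by continuity to all $x\in A\rtimes_{\alpha,r}G$. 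Assuming $(2)$, every $S_{\omega_{\xi,\eta}}(x)$ lies in $I=\ker\rho$, so all such matrix coefficients vanish; as the vectors $\zeta$ of this form span a dense subspace of $L^2(G,H_0)$, we conclude $\bigl(\rho_\alpha\rtimes(\lambda\otimes1)\bigr)(x)=0$, i.e.\ $x\in\ker q_r$.

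I expect the main obstacle to be this last step, and within it the need to choose the non-faithful representation $\rho=\sigma\circ q$ of $A$ precisely so that its integrated form factors $q_r$ through a \emph{faithful} representation of $(A/I)\rtimes_{\bar\alpha,r}G$, and then to match the matrix-coefficient double integral with the slice-map formula \eqref{sliceformula}. The remaining ingredients --- that $S_\psi$ takes values in $A$, that the $\omega_{\xi,\eta}$ with $\xi,\eta\in C_c(G)$ span a dense subspace of $B(L^2(G))_*$, and that the elementary-tensor vectors are dense in $L^2(G,H_0)$ --- are routine.
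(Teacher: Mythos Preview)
The paper does not supply a proof of this proposition: it is quoted verbatim from Kirchberg--Wassermann \cite[Proposition~2.2]{MR1721796}, so there is nothing in the present text to compare your argument against. That said, your proof is correct and is essentially the standard one; the only point worth flagging is that when you introduce the non-faithful representation $\rho=\sigma\circ q$ and write $\rho_\alpha\rtimes(\lambda\otimes1)=(\sigma_{\bar\alpha}\rtimes(\lambda\otimes1))\circ q_r$, the left-hand side is a priori only a representation of $C_c(G,A)$ (or of the full crossed product), so you should say explicitly that it extends to $A\rtimes_{\alpha,r}G$ \emph{because} the right-hand side does --- which you effectively do, but it is worth making the logical order transparent.
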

In order to prove our main theorem of this paper, we need to establish a connection between $C_b^{ru}(G)\rtimes_{R,r} G$ of a locally compact group $G$ and the uniform Roe algebra $C_u^*(Z)$ of a metric lattice $Z$ in the ambient group $G$.

Since $Z$ is uniformly discrete, we fix $\delta > 0$ such that for all $z,w \in Z$, $d(z, w) \geq \delta$, whenever $z \neq w$. Let $\varphi$ be a continuous compactly supported positive valued function on $G$ such that $\supp \varphi \subseteq B(e, \delta/4)$ and $\norm{\phi}_2=1$. For $z \in Z$, set $\varphi_z$ to be the function $g \mapsto \varphi(z^{-1}g)$ for $g \in G$. Clearly, each $\varphi_z$ is supported on a $\delta / 4$-neighbourhood around $z$. As $Z$ is $\delta$-uniformly discrete, the functions in the set $\{\varphi_z\}_{z \in Z}$ have pairwise disjoint support. In particular, $\{\varphi_z : z \in Z\}$ forms an orthonormal set in $L^2(G)$.
 
Define a linear isometry $W \colon \ell^2(Z) \to L^2(G)$ by the formula $\delta_z \mapsto \varphi_z$, which sends an orthonormal basis to an orthonormal set. For $\eta \in \ell^2(Z)$, we have that
\[
(W\eta)(x) = \sum_{z \in Z} \eta(z) \varphi_z(x)
\]
 for all $x \in G$. Moreover,
 \[
 (W^* \xi)(z) = \int_G \xi(y) \varphi_z(y) \, d\mu(y),\  \text{for $\xi \in L^2(G)$ and $z\in Z$.}
 \] 
Let $T \in C^*_u(Z)$ be a finite propagation operator and denote $\ip{T \delta_w}{\delta_z}$ by $T_{z,w}$. By the left invariance of the Haar integral we have that for every $x \in G$ and all $\xi \in L^2(G)$,
\begin{align*}
  (WTW^*)(\xi)(x) &= \sum_{z\in Z}\phi_z(x)\sum_{w\in Z}T_{z,w}\int_G \xi(y)\phi_w(y)\ d\mu(y)\\
                  &=\int_G \sum_{z,w \in Z} \varphi_z(x) \varphi_w(xy) T_{z,w} \xi(xy) \, d\mu(y),
\end{align*}
where we can switch the order of summation and integration as the sums are finite because $T$ has finite propagation and $Z$ is uniformly locally finite. 

Now, we define a function $\hat{T}:G\times G\ra \C$ given by
\begin{align}\label{hat(T)}
 \widehat T_y(x) = \sum_{z,w \in Z} \varphi_z(x) \varphi_w(xy) T_{z,w} \Delta(y)^{-1/2} \ \text{for $(y,x) \in G\times G$.}
\end{align}
In order to ease notation we will define 
\[
\sigma_{x,y}(z,w) :=   \varphi_z(x) \varphi_w(xy) T_{z,w} \Delta(y)^{-1/2} 
\]
for all $x,y \in G$ and $z,w \in Z$. 

\begin{rem} \label{rem:T hat is reduced to a single term}
Observe that if $\sigma_{x,y}(z,w)$ is non-zero then $d(x,z) \leq \delta / 4$ and $d(xy,w) \leq \delta / 4$. The metric lattice $Z$ is $\delta$-uniformly discrete so if $\sigma_{x,y}(z,w)$ and $\sigma_{x,y}(z',w')$ are both non-zero for some $z,z',w,w' \in Z$ then necessarily $(z,w)= (z',w')$. Hence if $x,y \in G$ such that $\hat{T}_y(x) \neq 0$ then there exists a unique pair $(z,w) \in Z \times Z$ such that $\hat{T}_y(x) = \sigma_{x,y}(z,w)$. Otherwise, if $\hat{T}_y(x) = 0$ then $\hat{T}_y(x) = \sigma_{x,y}(z,w) = 0$ for all $z,w \in Z$.

If $d(z,w) > \mathrm{Prop}(T)$ then $\sigma_{x,y}(z,w) = 0$. By the triangle inequality
\[
 d(e,y) = d(x,xy) \leq d(z,w) + d(x,z) + d(w,xy).
\]
Hence if $\sigma_{x,y}(z,w) \neq 0$ then $ d(z,w) \leq \mathrm{Prop}(T)$ and $d(x,z) + d(w,xy) \leq \delta/ 2$. Thus if $d(e,y) > \mathrm{Prop}(T) + \delta /2$ then $\sigma_{x,y}(z,w) = 0$ for all $z,w \in Z$ and $x \in G$.

Further observe that if $x,x',y,y' \in G$ such that $\hat{T}_y(x) \neq 0$, $\hat{T}_{y'}(x') \neq 0$ and $d(x,x') + d(xy,x'y') \leq \delta / 4$ then there exists only one pair $(z,w) \in Z \times Z$ such that $\hat{T}_{y}(x) = \sigma_{x,y}(z,w)$ and $\hat{T}_{y'}(x') = \sigma_{x',y'}(z,w)$. This follows from the triangle inequality and that $Z$ is $\delta$-uniformly discrete. 
\end{rem}




\begin{lem}
 Let $G$ be a locally compact second countable group and $Z$ a metric lattice. If $T \in C^*_u(Z)$ has finite propagation then $\hat{T} \in C_c(G,C^{ru}_{b}(G))$, where $\hat{T}$ is defined in \eqref{hat(T)}.
\end{lem}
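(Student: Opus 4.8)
The plan is to unpack the definition of $\hat{T}$ given in \eqref{hat(T)} and verify two things: first, that for each fixed $y \in G$ the function $x \mapsto \hat{T}_y(x)$ lies in $C_b^{ru}(G)$; and second, that the map $y \mapsto \hat{T}_y$ is norm-continuous and compactly supported as a $C_b^{ru}(G)$-valued function. Throughout I will exploit the observations collected in Remark \ref{rem:T hat is reduced to a single term}, which reduce all the sums over $Z \times Z$ to a single term once the arguments are fixed.

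First I would establish boundedness and right-uniform continuity of $\hat{T}_y$ for fixed $y$. Boundedness is immediate: by Remark \ref{rem:T hat is reduced to a single term}, $\hat{T}_y(x)$ equals a single term $\sigma_{x,y}(z,w) = \varphi_z(x)\varphi_w(xy)T_{z,w}\Delta(y)^{-1/2}$, which is bounded by $\norm{\varphi}_\infty^2 \norm{T} \Delta(y)^{-1/2}$. For right-uniform continuity, I need to control $|\hat{T}_y(xg) - \hat{T}_y(x)|$ uniformly in $x$ for $g$ near $e$. Shrinking $g$ so that $d(e,g)$ (equivalently $d(x,xg)$) and $d(xy, xgy)$ are both small — here one uses that $g \mapsto y^{-1}gy$ is continuous, or rather that right translation by $y$ on the left argument behaves well — the last observation in Remark \ref{rem:T hat is reduced to a single term} forces $\hat{T}_y(xg)$ and $\hat{T}_y(x)$ to come from the \emph{same} pair $(z,w)$, so the difference is $|T_{z,w}| \Delta(y)^{-1/2} |\varphi_z(xg)\varphi_w(xgy) - \varphi_z(x)\varphi_w(xy)|$, which is small by right-uniform continuity of $\varphi$ (a compactly supported continuous function is both left- and right-uniformly continuous) together with the uniform bound $|B(\cdot, R)| \le N$ coming from uniform local finiteness of $Z$ to handle the finitely many relevant pairs.

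Next I would verify compact support in $y$: by Remark \ref{rem:T hat is reduced to a single term}, if $d(e,y) > \mathrm{Prop}(T) + \delta/2$ then $\sigma_{x,y}(z,w) = 0$ for all $x,z,w$, so $\hat{T}_y = 0$; hence $\supp \hat{T}$ is contained in the compact set $\overline{B(e, \mathrm{Prop}(T) + \delta/2)}$. Finally, continuity of $y \mapsto \hat{T}_y$ in the $C_b^{ru}(G)$-norm: I estimate $\norm{\hat{T}_{y'} - \hat{T}_y}_\infty = \sup_x |\hat{T}_{y'}(x) - \hat{T}_y(x)|$ for $y'$ near $y$. For each fixed $x$, when $d(y,y')$ is small $\sigma_{x,y}(z,w)$ and $\sigma_{x,y'}(z,w)$ involve the same finitely many pairs (again by the disjointness/uniform-discreteness argument), and the difference is controlled by continuity of $\varphi$ (through $\varphi_w(xy)$ versus $\varphi_w(xy')$) and continuity of $\Delta$; the key point is that these estimates are uniform in $x$ because only boundedly many pairs $(z,w)$ ever contribute and $\varphi$ is uniformly continuous.

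The main obstacle I anticipate is bookkeeping the uniformity in $x$: each of the continuity statements reduces, pointwise in $x$, to a trivial estimate involving a single pair $(z,w) \in Z \times Z$, but that pair depends on $x$, so I must argue that a single modulus of continuity (for $\varphi$ and for $\Delta$) works simultaneously for all $x$, which is where uniform local finiteness of $Z$ and the compact-support/uniform-continuity of $\varphi$ must be invoked carefully. There is also a minor subtlety in the right-uniform-continuity argument in that perturbing $x$ on the right affects $xy$ by $g \mapsto xgy$, not by a fixed translation, so one should phrase things in terms of $d(xgy, xy) = d(gy, y)$ and use that this can be made small uniformly once $g$ is close enough to $e$ — this is exactly the statement that $\varphi$, having compact support, is right-uniformly continuous. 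None of this is deep, but it requires care to state cleanly.
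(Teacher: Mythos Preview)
Your proposal is correct and follows essentially the same route as the paper: both arguments verify compact support in $y$ via Remark \ref{rem:T hat is reduced to a single term}, then establish boundedness, right-uniform continuity in $x$, and norm-continuity in $y$ by reducing the sum to a single pair $(z,w)$ and invoking uniform continuity of $\varphi$ together with continuity of $\Delta$. One small sharpening: because Remark \ref{rem:T hat is reduced to a single term} already pins down a \emph{unique} pair $(z,w)$ (not merely finitely many), you do not actually need to appeal to uniform local finiteness of $Z$ for the uniformity in $x$.
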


\begin{proof}
In this proof we will show that the function $y \mapsto \hat{T}_y$ is compactly supported and continuous and $\hat{T}_y$ is bounded and right uniformly continuous for all $y \in G$.

We first show the map $y \mapsto \hat{T}_y$ is compactly supported. If $G$ is compact then clearly the map is compactly supported. Assume $G$ is not compact and let $y \in G$ be such that $d(e,y) > \mathrm{Prop}(T) + \delta/2$. Suppose for a contradiction that $\supp{\hat{T}_y} \neq \emptyset$. Then there exists $x \in G$ such that $\hat{T}_y(x) \neq 0$. By Remark \ref{rem:T hat is reduced to a single term} there exists a unique pair $(z,w) \in Z \times Z$ such that $\hat{T}_y(x) = \sigma_{x,y}(z,w)$. As $d(x,xy) > \mathrm{Prop}(T) + \delta/2$, it follows that $\hat{T}_y(x) = \sigma_{x,y}(z,w) = 0$ which contradicts the choice of $x \in G$. Hence $\supp{\hat{T}_y} = \emptyset$ whenever $d(e,y) > \mathrm{Prop}(T) + \delta/2$.

Now we show that for all $y \in G$, $\hat{T}_y$ is bounded. If $y \notin \supp{\hat{T}}$ then clearly $\hat{T}_y$ is bounded so we assume $y \in \supp{\hat{T}}$. Let $M > 0$ be such that $\sup \{\Delta(y)^{-1/2} : y \in \supp{\hat{T}}\} < M < \infty$. This is well-defined because $\hat{T}$ is compactly supported. Let $x \in G$ be such that $\hat{T}_y(x) \neq 0$. By Remark \ref{rem:T hat is reduced to a single term} it follows that there exists a unique pair $(z,w) \in Z \times Z$ such that $\hat{T}_y(x) = \sigma_{x,y}(z,w)$. Hence for all $x \in G$,
\[
 |\hat{T}_y(x)| \leq \norm{\varphi}^2_{\infty}\norm{T}M.
\]
%
%
We will now show the continuity conditions so fix $\varepsilon > 0$ for the remainder of the proof. First we show that for all $y \in G$, $\hat{T}_y$ is right uniformly continuous. If $\hat{T}_y$ is equal to the zero function then clearly $\hat{T}_y$ is right uniformly continuous so we assume $\hat{T}_y$ is not equal to the zero function. Since $\varphi$ is uniformly continuous on the metric space $(G,d)$, there exists $\delta' > 0$ such that for every $x,x'\in G$ with $d(x,x')<\delta'$, we have that 
\[
\norm{T}|\varphi(x) - \varphi(x')| < \frac{\varepsilon }{2\norm{\varphi}_{\infty} M}. 
\]
 Choose $g \in G$ such that $d(e,g) + d(e,y^{-1} g y)< \min \{\delta', \delta / 4 \}$. For $x \in G$ there exist $(z,w), (z',w') \in Z \times Z$, depending on $x$ and $g$, such that
\begin{equation*}
 |R_g \hat{T}_y(x) - \hat{T}_y(x)| = |\hat{T}_y(xg) - \hat{T}_y(x)|= |\sigma_{xg,y}(z,w) - \sigma_{x,y}(z',w')|.
\end{equation*}
As we have chosen $g \in G$ such that $d(x,xg) + d(y,gy) < \delta / 4$ it follows by Remark \ref{rem:T hat is reduced to a single term} that $(z,w) = (z',w')$. Hence,
 \begin{multline*}
 |R_g \hat{T}_y(x) - \hat{T}_y(x)| \leq |T_{z,w}| |\varphi_z(xg) - \varphi_z(x)| \varphi_w(xgy)  \Delta(y)^{-1/2} \\ + |T_{z,w}||\varphi_w(xgy) - \varphi_w(xy)| \varphi_z(x) \Delta(y)^{-1/2} < \varepsilon.
\end{multline*}
It follows that $|R_g \hat{T}_y(x) - \hat{T}_y(x)| < \varepsilon$ for all $x \in G$ and because $g$ does not depend on $x$ it follows that $\hat{T}_y \in C^{ru}_b(G)$ for all $y \in G$.

The modular function is continuous so for all $y_0 \in G$ there exists $\delta''(y_0)>0$ such that for all $y \in G$ if $d(y,y_0) < \delta''(y_0)$ then 
\[
\norm{T} |\Delta(y)^{-1/2} - \Delta(y_0)^{-1/2}| <\frac{ \varepsilon}{2 \norm{\varphi}_{\infty}^2}.
\]
 We now complete the proof by showing the function $y \mapsto \hat{T}_y$ is continuous. Fix $y_0 \in G$ and choose $y \in G$ such that $d(y, y_0) <  \min \{ \delta' , \delta''(y_0), \delta/4 \}$. For $x \in G$, there exists pairs $(z,w),(z',w') \in Z \times Z$ such that 
\[
|\hat{T}_y(x)-\hat{T}_{y_0}(x)| = |\sigma_{x,y}(z,w) - \sigma_{x,y_0}(z',w')|.
\]
By Remark \ref{rem:T hat is reduced to a single term} it follows that if $x,y, y_0 \in G$ such that $\hat{T}_y(x) \neq 0$ and $\hat{T}_{y_0}(x) \neq 0$  then the pairs $(z,w)$ and $(z',w')$ are forced to equal each other. Otherwise we can choose the pairs $(z,w)$ and $(z',w')$ to equal each other. Hence
%
\begin{multline*}
 |\hat{T}_y(x)-\hat{T}_{y_0}(x)| < |T_{z,w}| |\varphi_w(xy) - \varphi_{w}(xy_0)| \varphi_{z}(x)  \Delta(y)^{-1/2}\\ + |T_{z,w}| |\Delta(y)^{-1/2} - \Delta(y_0)^{-1/2}| \varphi_{w}(xy_0) \varphi_z(x)  < \varepsilon.
\end{multline*}
Thus $|\hat{T}_y(x)-\hat{T}_{y_0}(x)| < \varepsilon$ for all $x \in G$ so the function $y \mapsto \hat{T}_y$ is continuous.
%
%
%
%
\end{proof}

\begin{prop}\label{mainprop}
Let $G$ be a locally compact second countable group equipped with a proper left-invariant compatible metric $d$. If $Z$ is a uniformly locally finite metric lattice in the metric space $(G,d)$, then there exist a faithful $*$-homomorphism $\Phi:C_u^*(Z)\ra C_b^{ru}(G)\rtimes_{R,r} G$ and a c.c.p. map $E: C_b^{ru}(G)\rtimes_{R,r} G\ra C_u^*(Z)$ satisfying the following properties:
\begin{itemize}
\item[(1)] $E\circ \Phi= \emph{Id}_{C_u^*(Z)}$.
\item[(2)] $T\in K(\ell^2(Z))$ if and only if $\Phi(T)\in C_0(G)\rtimes_{R,r} G$.
\end{itemize}
\end{prop}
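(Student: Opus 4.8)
The plan is to realize $\Phi$ as conjugation $T\mapsto WTW^{*}$ by the isometry $W\colon\ell^{2}(Z)\to L^{2}(G)$ and $E$ as the compression $a\mapsto W^{*}aW$, and then to read off all of the claimed properties from the single identity
\[
WTW^{*}=(M\rtimes\rho)(\hat{T}),
\]
valid for every finite propagation $T\in C^{*}_{u}(Z)$, where $M\rtimes\rho$ denotes the integrated form of the covariant representation $(M,\rho)$ on $L^{2}(G)$, which represents $C_b^{ru}(G)\rtimes_{R,r}G$ under the isomorphism used without further mention above. This identity is just the computation $((M\rtimes\rho)(\hat{T})\xi)(x)=\int_{G}\hat{T}_{y}(x)\,\xi(xy)\,\Delta(y)^{1/2}\,d\mu(y)$ combined with the formula for $WTW^{*}$ displayed above and the definition of $\hat{T}$ in \eqref{hat(T)}.

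With this in hand, $\Phi$ is straightforward to construct. By the preceding Lemma, $\hat{T}\in C_{c}(G,C_b^{ru}(G))$ for finite propagation $T$, so $WTW^{*}\in C_b^{ru}(G)\rtimes_{R,r}G$. Since $W^{*}W=\mathrm{Id}_{\ell^{2}(Z)}$, the map $T\mapsto WTW^{*}$ is linear, $*$-preserving and multiplicative on the dense $*$-subalgebra $\bigcup_{R>0}E_{R}$ of finite propagation operators, and it is isometric because $W^{*}(WTW^{*})W=T$. Hence it extends uniquely to an isometric, in particular faithful, $*$-homomorphism $\Phi\colon C^{*}_{u}(Z)\to C_b^{ru}(G)\rtimes_{R,r}G$. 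Dually, set $E(a):=W^{*}aW$; since $W$ is an isometry this is completely positive and contractive, and $E(\Phi(T))=W^{*}WTW^{*}W=T$ first for finite propagation $T$ and then, by continuity, for all $T\in C^{*}_{u}(Z)$, which gives (1).

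The one point requiring genuine work is that $E$ actually takes values in $C^{*}_{u}(Z)$. For $f\in C_{c}(G,C_b^{ru}(G))$ whose support in the $G$-variable lies in a compact set $K$, one computes
\[
\langle W^{*}(M\rtimes\rho)(f)W\,\delta_{w},\delta_{z}\rangle=\int_{G}\int_{G}f_{s}(x)\,\varphi_{w}(xs)\,\Delta(s)^{1/2}\,\overline{\varphi_{z}(x)}\,d\mu(s)\,d\mu(x).
\]
Because $\varphi$ is supported in $B(e,\delta/4)$, the integrand vanishes unless $d(z,x)<\delta/4$, $d(w,xs)<\delta/4$ and $s\in K$, and then the triangle inequality together with the left invariance of $d$ gives $d(z,w)<\delta/2+\sup_{s\in K}d(e,s)$. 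Thus $W^{*}(M\rtimes\rho)(f)W$ has finite propagation, hence lies in $C^{*}_{u}(Z)$; since such $f$ are dense in $C_b^{ru}(G)\rtimes_{R,r}G$, the map $E$ is norm-continuous, and $C^{*}_{u}(Z)$ is closed, it follows that $E(C_b^{ru}(G)\rtimes_{R,r}G)\subseteq C^{*}_{u}(Z)$.

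Finally, for (2) I would use that $M\rtimes_{r}\rho$ restricts to a $*$-isomorphism $C_{0}(G)\rtimes_{R,r}G\cong K(L^{2}(G))$, so that, as subalgebras of $B(L^{2}(G))$, $\Phi(T)\in C_{0}(G)\rtimes_{R,r}G$ if and only if $WTW^{*}\in K(L^{2}(G))$. If $T\in K(\ell^{2}(Z))$ then $WTW^{*}\in K(L^{2}(G))$ since the compacts form a closed ideal in the bounded operators; conversely if $WTW^{*}\in K(L^{2}(G))$ then $T=W^{*}(WTW^{*})W\in K(\ell^{2}(Z))$ for the same reason, which establishes (2). I expect the main obstacle to be bookkeeping: verifying the identity $WTW^{*}=(M\rtimes\rho)(\hat{T})$ carefully and keeping track of the several unitary identifications among $C_b^{ru}(G)\rtimes_{(M,\rho)}G$, $C_b^{ru}(G)\rtimes_{R,r}G$ and their corresponding ideals; once these are settled, everything else is formal manipulation of compressions by isometries together with the propagation estimate above.
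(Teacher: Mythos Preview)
Your proposal is correct and follows essentially the same route as the paper: both realize $\Phi$ via $T\mapsto WTW^{*}$ using the identity $WTW^{*}=(M\rtimes_r\rho)(\hat{T})$ for finite propagation $T$, define $E$ as the compression by $W$, verify that $E$ lands in $C^{*}_{u}(Z)$ by the same propagation estimate on $C_{c}(G,C_b^{ru}(G))$, and deduce (2) from the isomorphism $C_{0}(G)\rtimes_{R,r}G\cong K(L^{2}(G))$ together with the ideal property of the compacts. Your write-up is slightly more explicit than the paper's about why $\Phi$ is a faithful $*$-homomorphism, but the content is the same.
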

\begin{proof}

Let $M:C_b^{ru}(G)\ra B(L^2(G))$ be the multiplication operator on $L^2(G)$ and $\rho:G\ra B(L^2(G))$ be the right regular representation. We will show that for a finite propagation operator $T \in C^*_u(Z)$, the operator $WTW^*$ in $B(L^2(G))$ is the image of $\hat{T}$ under the faithful $*$-representation $M\rtimes_r\rho:C_b^{ru}(G)\rtimes_{r,R} G\ra B(L^2(G))$.
Indeed, for all $\xi \in L^2(G)$ and $x \in G$ we have that
\begin{align}\label{111}
 (M \rtimes_r \rho)(\widehat T)(\xi)(x)= \int_G \widehat T_y(x) \xi(xy) \Delta(y)^{1/2} \, d\mu(y) =(WTW^*)(\xi)(x).
\end{align}

Since the image of $M \rtimes_r \rho$ is closed, we conclude that $WC_u^*(Z)W^*$ is contained in the image of $M \rtimes_r \rho$. Hence, there is a well-defined faithful $*$-homomorphism $$\Phi:C_u^*(Z)\ra C_b^{ru}(G)\rtimes_{R,r} G$$ defined by 
 \[
 \Phi(T)=(M \rtimes_r \rho)^{-1}(WTW^*).
 \]
  If $T$ has finite propagation, then it follows from the calculation \ref{111} that $\Phi(T)=\hat{T}$, where $\hat{T}$ is given by the formula \ref{hat(T)}.
  
 We now define a c.c.p. map $E:C_b^{ru}(G)\rtimes_{R,r} G \ra B(\ell^2(Z))$
by 
\[
E(a)=W^*(M \rtimes_r \rho(a))W,
\]
for $a\in C_b^{ru}(G)\rtimes_{R,r} G$. 

We claim that the image of $E$ is contained in $C_u^*(Z)$. Indeed, let $f\in C_c(G,C_b^{ru}(G))$ with support in $B(e,s)$ for some $s>0$, then
\begin{align*}
\ip{W^*(M\rtimes_r \rho (f))W\delta_w}{\delta_z}& =\ip{M\rtimes_r \rho (f)(\phi_w)}{\phi_z}\\
& =\int_{G} \int_{G} f_y(x)\phi_w(xy)\Delta(y)^{1/2}\phi_z(x)\,d\mu(y)d\mu(x).
\end{align*}
If $\ip{W^*(M\rtimes_r \rho (f))W\delta_w}{\delta_z}\neq 0$, then there exist $x,y$ in $G$ such that $f_y(x)\phi_w(xy)\phi_z(x)\neq 0$. It follows that
$$
d(z,w)\leq d(z,x)+d(x,xy)+d(xy,w)\leq \delta/4+d(e,y)+\delta/4\leq s+\delta/2,
$$
where the last inequality follows from the fact that $f_y(x)\neq 0$. In particular, $E(f)$ has propagation at most $s + \delta/2$. So the image of $E$ is contained in $C_u^*(Z)$ as desired.

Property (1) in the statement of the Proposition now follows from the constructions of $\Phi$ and $E$. 
Furthermore, since $W$ is an isometry and $M \rtimes_r \rho$ induces a $*$-isomorphism between $C_0(G)\rtimes_{R,r}G$ and $K(L^2(G))$, property (2) in the statement follows easily from the fact that the compact operators form a two-sided ideal.
\end{proof}

\begin{prop}\label{mainprop2} 
Let $G$ be a locally compact second countable group equipped with a proper left-invariant compatible metric $d$, and let $Z$ be a uniformly locally finite metric lattice in the metric space 
$(G,d)$. Furthermore, let $\Phi:C_u^*(Z)\ra C_b^{ru}(G)\rtimes_{R,r} G$ be the map constructed in Proposition \ref{mainprop}. Then the image of the ghost ideal $G^*(Z)$ under $\Phi$ is contained in the kernel of the map 
\[
q_r: C_b^{ru}(G)\rtimes_{R,r} G\ra (C_b^{ru}(G)/C_0(G))\rtimes_{R,r} G.
\]
\end{prop}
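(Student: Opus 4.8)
The plan is to verify the criterion of Proposition~\ref{slicemap} for the $G$-equivariant short exact sequence $0 \to C_0(G) \to C_b^{ru}(G) \to C_b^{ru}(G)/C_0(G) \to 0$ (with the right-translation action $R$): it suffices to show that $S_{\omega_{\xi,\eta}}(\Phi(T)) \in C_0(G)$ for every $T \in G^*(Z)$ and every pair $\xi,\eta \in C_c(G)$, where $S_{\omega_{\xi,\eta}}$ is the slice map of \eqref{sliceformula}. Since $\Phi$ is contractive, each $S_{\omega_{\xi,\eta}}$ is bounded, and $C_0(G)$ is closed, I would first write $T$ as a norm limit of finite-propagation operators $T_n \in \bigcup_{R>0}E_R$; note that the $T_n$ need not be ghosts (indeed the finite-propagation ghosts are exactly the compact operators, so reducing to them would only reprove Proposition~\ref{mainprop}(2)). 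For these $T_n$ we have $\Phi(T_n) = \widehat{T_n} \in C_c(G,C_b^{ru}(G))$ as in the proof of Proposition~\ref{mainprop}, so \eqref{sliceformula} applies and yields, for every $x \in G$,
\[
S_{\omega_{\xi,\eta}}(\widehat{T_n})(x) = \int_G\int_G \xi(g^{-1}h)\,\overline{\eta(h)}\,(\widehat{T_n})_g(xh^{-1})\, d\mu(g)\, d\mu(h),
\]
where the integrand is supported on the compact set $\{(g,h) : h \in \supp\eta,\ g \in h\,(\supp\xi)^{-1}\}$.

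The heart of the argument is the following observation. Expanding $(\widehat{T_n})_g(xh^{-1})$ by \eqref{hat(T)} and invoking Remark~\ref{rem:T hat is reduced to a single term}, for each fixed $(x,g,h)$ at most one pair $(z,w) \in Z\times Z$ contributes to the inner sum, and such a pair satisfies $d(z,xh^{-1}) \leq \delta/4$ and $d(w,xh^{-1}g) \leq \delta/4$; by left invariance of $d$ this forces $d(z,w) \leq \delta/2 + d(e,g) \leq R_0$, where $R_0 := \delta/2 + \sup\{d(e,g) : g \in \supp\eta\,(\supp\xi)^{-1}\} < \infty$ is independent of $n$ and $x$. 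Hence only the matrix entries $(T_n)_{z,w}$ with $d(z,w) \leq R_0$ enter the displayed integral; since $\sup_{z,w}|(T_n)_{z,w} - T_{z,w}| \leq \norm{T_n - T} \to 0$, the functions $S_{\omega_{\xi,\eta}}(\widehat{T_n})$ converge, uniformly in $x$, to
\[
F(x) := \int_G\int_G \xi(g^{-1}h)\,\overline{\eta(h)}\,\Delta(g)^{-1/2}\sum_{z,w\in Z}\varphi_z(xh^{-1})\varphi_w(xh^{-1}g)\,T_{z,w}\, d\mu(g)\,d\mu(h).
\]
By continuity of the slice map (and $\Phi(T_n)\to\Phi(T)$), this gives $S_{\omega_{\xi,\eta}}(\Phi(T)) = F$, which in particular is a bounded continuous function on $G$.

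It remains to show $F \in C_0(G)$, and this is exactly where the ghost hypothesis enters. If $\varphi_z(xh^{-1}) \neq 0$ with $h \in \supp\eta$, then $d(e,z) \geq d(e,xh^{-1}) - \delta/4 \geq d(e,x) - d(e,h^{-1}) - \delta/4 \geq d(e,x) - R_\eta - \delta/4$ with $R_\eta := \sup_{h\in\supp\eta} d(e,h^{-1})$; so, uniformly over the compact range of $(g,h)$, the contributing index $z$ leaves every bounded subset of $Z$ as $d(e,x)\to\infty$. Given $\epsilon>0$, choosing a finite $F' \subseteq Z$ with $|T_{z,w}| < \epsilon$ for $z \notin F'$ (possible since $T$ is a ghost and $Z$ is uniformly locally finite) and then $d(e,x)$ large enough that every contributing $z$ lies outside $F'$, every term of the sum defining $F(x)$ has modulus $<\norm{\varphi}_{\infty}^2\Delta(g)^{-1/2}\epsilon$, so $|F(x)| \leq C_{\xi,\eta}\,\epsilon$ with $C_{\xi,\eta} := \norm{\varphi}_{\infty}^2\int_G\int_G |\xi(g^{-1}h)|\,|\eta(h)|\,\Delta(g)^{-1/2}\, d\mu(g)\,d\mu(h)<\infty$. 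Hence $F$ vanishes at infinity, and Proposition~\ref{slicemap} yields $\Phi(T)\in\ker q_r$. I expect the only real work to be the bookkeeping in the middle step — checking that the propagation bound $R_0$ and the escape of the contributing indices to infinity are genuinely uniform over the compact integration domain — but this uniformity is precisely what the pairwise-disjoint, $\delta/4$-small supports of the $\varphi_z$ are designed to deliver through Remark~\ref{rem:T hat is reduced to a single term}, so no essential obstacle is anticipated.
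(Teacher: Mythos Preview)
Your proof is correct and follows essentially the same route as the paper's: verify the slice-map criterion of Proposition~\ref{slicemap} by approximating the ghost with finite-propagation operators, use Remark~\ref{rem:T hat is reduced to a single term} to reduce the inner sum to a single contributing pair $(z,w)$, and then show this pair escapes to infinity as $x$ does. One small point to tighten: the ghost condition controls $|T_{z,w}|$ only when \emph{both} $z$ and $w$ leave bounded sets, so in your final step you should invoke your bound $d(z,w)\leq R_0$ once more to conclude that $w$ also goes to infinity (the paper checks $d(e,z)>N$ and $d(e,w)>N$ separately); with that said, the argument is complete.
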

\begin{proof}
Let $H\in G^*(Z)$. By Proposition \ref{slicemap}, it is sufficient to show that $S_{\omega_{\xi,\eta}}(\Phi(H))\in C_0(G)$ for all $\xi,\eta\in C_c(G)$, where $S_{\omega_{\xi,\eta}}$ is the slice map given by the formula \ref{sliceformula}.
 
Given $\epsilon>0$ and $\xi, \eta \in C_c(G)$, we need to find $C>0$ such that
\begin{align*}
d(e,x)>C\Rightarrow |S_{\omega_{\xi,\eta}}(\Phi(H))(x)|<\epsilon.
\end{align*}
Consider the bounded continuous kernel $k:G\times G\ra \C$ given by $k(g,h)=\xi(g^{-1}h)\overline{\eta(h)}$. Since both $\xi$ and $\eta$ are compactly supported, there exist two compact subsets $K_1$ and $K_2$ such that $\supp k\subseteq K_1\times K_2$. Let $D$ be a positive number such that
\begin{align*}
d(g,h)+d(e,h)+\Delta(g)^{-1/2}\leq D,
\end{align*}
for all $(g,h)\in K_1\times K_2$. Choose a small $\epsilon'>0$ such that
\begin{align*}
3\epsilon' D\norm{\xi}_\infty \norm{\eta}_\infty\mu(K_1)\mu(K_2)\norm{\phi}^2_\infty \leq \epsilon.
\end{align*}
Since $H$ is a ghost, we choose a $N>0$ such that if $z,w\notin B(e,N)$ then $|H_{z,w}|<\epsilon'$. As the slice map $S_{\omega_{\xi,\eta}}$ is continuous, we can choose an operator $T\in C_u^*(Z)$ of finite propagation such that 
\[
\norm{T-H}_{B(\ell^2(Z))}+\norm{S_{\omega_{\xi,\eta}}(\Phi(T)-\Phi(H))}_\infty<\text{min}\{\epsilon/3,\epsilon'\}.
\]
 In particular, $|T_{z,w}-H_{z,w}|<\epsilon'$ for all $z,w \in Z$. Recall that $\Phi(T)=\hat{T}$, which is given by the formula \eqref{hat(T)}, and for all $x \in G$,
\begin{align*}
S_{\omega_{\xi,\eta}}(\hat{T})(x)&=\int_G\int_G\xi(g^{-1}h)\overline{\eta(h)}R_{h^{-1}}(\hat{T}_g)(x)\ d\mu(g)d\mu(h)\\
                                 &=\int_G\int_G k(g,h)\sum_{z,w\in Z}\phi_z(xh^{-1})\phi_w(xh^{-1}g)T_{z,w}\Delta(g)^{-1/2}\  d\mu(g)d\mu(h).                          
\end{align*}

Set $C=N+D+\delta/4$. For each $x \notin B(e,C)$ we show that $|S_{\omega_{\xi,\eta}}(\Phi(H))(x)|<\epsilon$. If $S_{\omega_{\xi,\eta}}(\Phi(H))(x)= 0$, we are done. Otherwise, there exist $g\in K_1$ and $h\in K_2$ such that $\sum_{z,w\in Z}\phi_z(xh^{-1})\phi_w(xh^{-1}g)T_{z,w}\Delta(g)^{-1/2}\neq 0$. By Remark \ref{rem:T hat is reduced to a single term} there exists only one unique pair $(z(h),w(g,h)) \in Z \times Z$, depending on $g,h \in G$, such that $\phi_{z(h)}(xh^{-1})$ and $\phi_{w(g,h)}(xh^{-1}g)$ are both non-zero. In particular, $(z(h),w(g,h)) \in B(xh^{-1},\delta/4)\times B(xh^{-1}g,\delta/4)$ and
\[
S_{\omega_{\xi,\eta}}(\hat{T})(x)=\int_{K_2}\int_{K_1} k(g,h)\phi_{z(h)}(xh^{-1})\phi_{w(g,h)}(xh^{-1}g)T_{z(h),w(g,h)}\Delta(g)^{-1/2}\  d\mu(g)d\mu(h).
\] 
We see that $|T_{z(h),w(g,h)}|\leq |T_{z(h),w(g,h)}-H_{z(h),w(g,h)}|+|H_{z(h),w(g,h)}|<2\epsilon'$, because
\begin{align*}
d(e,z(h))&\geq d(x,e)-d(h,e)-d(z(h),xh^{-1})>C-D-\delta/4=N,\\
d(e,w(g,h))&\geq d(e,x)-d(xh^{-1}g,x)-d(w(g,h),xh^{-1}g)>C-D-\delta/4=N.
\end{align*}
It follows that
\begin{align*}
|S_{\omega_{\xi,\eta}}(\hat{T})(x)|<2\epsilon'D\norm{\xi}_\infty\norm{\eta}_\infty \mu(K_1)\mu(K_2)\norm{\phi}^2_\infty\leq \frac{2\epsilon}{3},
\end{align*}
for all $x\notin B(e,C)$. Hence whenever $x \notin B(e,C)$,
\[
|S_{\omega_{\xi,\eta}}(\Phi(H))(x)|\leq |S_{\omega_{\xi,\eta}}(\Phi(H)-\Phi(T))(x)|+|S_{\omega_{\xi,\eta}}(\Phi(T))(x)|<\frac{\epsilon}{3}+\frac{2\epsilon}{3}=\epsilon. \qedhere
 \]
\end{proof}

We are ready to prove the main theorem of this paper. Recall that $C_b^{lu}(G)\cong C(\beta^{lu}(G))$ and $C_b^{lu}(G)/C_0(G)\cong C(\partial G)$.
\begin{thm} \label{thm:amenable at infinity and exactness}
Let $G$ be a locally compact second countable group. Then the following conditions are equivalent.
\begin{itemize}
\item[(1)] $G$ is amenable at infinity.
\item[(2)] $G$ is exact.
\item[(3)] The sequence 
\begin{align*}
0\rightarrow C_0(G)\rtimes_{L,r}G\rightarrow C(\beta^{lu}(G))\rtimes_{L,r}G \rightarrow C(\partial G)\rtimes_{L,r}G \rightarrow 0
\end{align*}
is exact.
\item[(4)] $C(\partial G)\rtimes_{L}G\cong C(\partial G)\rtimes_{L,r}G$ canonically.
\item[(5)] $C(\beta^{lu}(G))\rtimes_{L,r}G$ is nuclear.
\end{itemize}
\end{thm}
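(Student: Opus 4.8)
The strategy is to prove the cycle of implications $(1) \Rightarrow (2) \Rightarrow (3) \Rightarrow (1)$ and the side-equivalences $(2) \Leftrightarrow (5)$ and $(3) \Leftrightarrow (4)$. The implications that come essentially for free are $(1) \Rightarrow (5)$ and $(1) \Rightarrow (2)$: the first is immediate from Proposition \ref{amenat_infinity} (the $G$-action on $\beta^{lu}(G)$ is topologically amenable) together with the implication $(1)\Rightarrow(3)$ of Theorem \ref{ADtop-dyn}, since $C(\beta^{lu}(G)) = C_b^{lu}(G) \cong C_0(\beta^{lu}(G))$; and the second is the theorem of Anantharaman-Delaroche \cite[Theorem 7.2]{MR1926869} already recalled in the text. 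For $(2) \Rightarrow (3)$ one just applies exactness of $-\rtimes_r G$ to the $G$-equivariant short exact sequence $0 \to C_0(G) \to C_b^{lu}(G) \to C_b^{lu}(G)/C_0(G) \to 0$, noting that injectivity of the first map and surjectivity of the last always hold (see the Remark after the definition of exactness). For $(2) \Rightarrow (5)$: if $G$ is exact then $C^*_r(G)$ is exact, and $C_b^{lu}(G) \rtimes_{L,r} G$ contains $C_0(G) \rtimes_{L,r} G \cong K(L^2(G))$ as an ideal; more directly, exactness of $G$ makes the action on $\beta^{lu}(G)$ amenable via the equivalence we are proving — so it is cleaner to deduce $(5)$ from $(1)$ and get $(5) \Rightarrow (3)$ by a standard nuclear-ideal argument (nuclearity of the middle term forces the sequence in $(3)$ to be exact, since the quotient is then also nuclear and one can lift). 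The equivalence $(3) \Leftrightarrow (4)$ should follow from comparing the full and reduced crossed-product sequences: the full sequence is always exact, the map $C(\partial G)\rtimes_L G \to C(\partial G)\rtimes_{L,r}G$ is always surjective, and a diagram chase shows that exactness of the reduced sequence $(3)$ is equivalent to this map being injective, i.e. to $(4)$.

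The substantive implication, and the one that answers Anantharaman-Delaroche's problem, is $(3) \Rightarrow (1)$, and this is where Propositions \ref{mainprop} and \ref{mainprop2} do the work. I would argue by contraposition. Suppose $G$ is not amenable at infinity. Fix a proper left-invariant compatible metric $d$ and a uniformly locally finite metric lattice $Z \subseteq (G,d)$, which exists since $(G,d)$ has bounded geometry. By \cite[Corollary 2.9]{MR3420532}, $G$ not amenable at infinity means $(G,d)$ fails property $A$, hence by Proposition \ref{metriclattice-propertyA} the lattice $Z$ fails Yu's property $A$. By Theorem \ref{ghost-A}, $C^*_u(Z)$ contains a non-compact ghost operator $H \in G^*(Z)$. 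Now push it into the crossed product: by Proposition \ref{mainprop}, $\Phi(H) \in C_b^{ru}(G) \rtimes_{R,r} G$, and by Proposition \ref{mainprop2}, $\Phi(H) \in \ker q_r$, i.e. $\Phi(H)$ lies in the kernel of the quotient map to $(C_b^{ru}(G)/C_0(G)) \rtimes_{R,r} G$. On the other hand $\Phi(H) \notin C_0(G) \rtimes_{R,r} G$: by part (2) of Proposition \ref{mainprop}, $\Phi(H)$ lands in $C_0(G) \rtimes_{R,r} G$ iff $H \in K(\ell^2(Z))$, which is false. Hence the image of $C_0(G) \rtimes_{R,r} G$ is strictly smaller than $\ker q_r$, so the sequence $0 \to C_0(G)\rtimes_{R,r}G \to C_b^{ru}(G)\rtimes_{R,r}G \to (C_b^{ru}(G)/C_0(G))\rtimes_{R,r}G \to 0$ is not exact; transporting along the $*$-isomorphism $C_b^{ru}(G)\rtimes_{R,r}G \cong C_b^{lu}(G)\rtimes_{L,r}G$ of the Proposition in Section \ref{sec:topologically amenable actions} shows the sequence in $(3)$ fails, which is the desired contrapositive.

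The main obstacle is entirely contained in the already-proved Propositions \ref{mainprop} and \ref{mainprop2}: producing the faithful embedding $\Phi$ that is compatible with the ideal $C_0(G) \rtimes_{R,r} G$ and with the slice maps, so that ghosts map to $\ker q_r \setminus (C_0(G)\rtimes_{R,r}G)$. Within the proof of the theorem itself, the only points needing care are the bookkeeping in the $(5) \Rightarrow (3)$ step (nuclearity of the middle and quotient algebras, plus the standard fact that a short exact sequence with nuclear quotient is automatically ``exact at the middle'' after taking reduced crossed products — here one uses that the kernel of $q_r$ contains the image of the ideal and that a completely positive lifting exists), and checking in $(3) \Leftrightarrow (4)$ that the canonical map in $(4)$ being an isomorphism is genuinely equivalent to exactness of $(3)$ and not merely implied by it. Assembling the cycle $(1) \Rightarrow (5) \Rightarrow (3) \Rightarrow (1)$ and $(1) \Rightarrow (2) \Rightarrow (3)$, $(3) \Leftrightarrow (4)$ then closes all equivalences.
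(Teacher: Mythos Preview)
Your main cycle $(1)\Rightarrow(2)\Rightarrow(3)\Rightarrow(1)$ matches the paper exactly, including the contrapositive argument for $(3)\Rightarrow(1)$ via a non-compact ghost in $C^*_u(Z)$ pushed through $\Phi$; that part is fine. The two side-equivalences, however, do not close the way you propose.

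For $(5)\Rightarrow(3)$: there is no ``standard nuclear-ideal argument'' that does this. Nuclearity of $C(\beta^{lu}(G))\rtimes_{L,r}G$ tells you the quotient $Q:=\bigl(C(\beta^{lu}(G))\rtimes_{L,r}G\bigr)/\bigl(C_0(G)\rtimes_{L,r}G\bigr)$ is nuclear, and $Q$ sits between $C(\partial G)\rtimes_L G$ and $C(\partial G)\rtimes_{L,r}G$ as a completion of $C_c(G,C(\partial G))$; but nuclearity of $Q$ does not force $Q$ to equal the reduced crossed product, and the existence of a completely positive lift is irrelevant to the question of whether $\ker q_r=\operatorname{Im}\iota_r$. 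The paper instead proves $(5)\Rightarrow(1)$ directly, using part (1) of Proposition~\ref{mainprop}: the identity on $C^*_u(Z)$ factors as $E\circ\Phi$ through the nuclear algebra $C_b^{ru}(G)\rtimes_{R,r}G$ by c.c.p.\ maps, so $C^*_u(Z)$ is nuclear, whence $Z$ has property~$A$ by Theorem~\ref{A_Nuclear}, and $G$ is amenable at infinity by Proposition~\ref{metriclattice-propertyA} and \cite[Corollary~2.9]{MR3420532}.

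For $(3)\Rightarrow(4)$: the diagram chase you describe does not go through. In the commuting ladder with exact full row on top and (assumed) exact reduced row on the bottom, the left vertical map $C_0(G)\rtimes_L G\to C_0(G)\rtimes_{L,r}G$ is an isomorphism, but the middle vertical $C(\beta^{lu}(G))\rtimes_L G\to C(\beta^{lu}(G))\rtimes_{L,r}G$ is only known to be surjective; the snake lemma then yields $\ker(\text{middle})\cong\ker(\text{right})$, so injectivity on the right (which is $(4)$) is equivalent to injectivity in the middle, and neither is available from $(3)$ alone. The paper avoids this by proving $(1)\Rightarrow(4)$ directly: amenability at infinity gives topological amenability of the action on $\partial G$ (Proposition~\ref{amenat_infinity}), and then Theorem~\ref{ADtop-dyn} yields $C(\partial G)\rtimes_L G=C(\partial G)\rtimes_{L,r}G$. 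The direction $(4)\Rightarrow(3)$ does follow from a diagram chase, exactly as you say.
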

\begin{proof}
We will show that $(1)\Rightarrow (2)\Rightarrow (3)\Rightarrow (1)$, $(1)\Rightarrow(4)\Rightarrow (3)$ and $(1)\Leftrightarrow (5)$.

$(1)\Rightarrow (2)$: This follows from \cite[Theorem~7.2]{MR1926869}.

$(2)\Rightarrow (3)$: This follows from the  definition of the exactness of $G$.

$(3)\Rightarrow (1)$: We will show that if $G$ is \emph{not} amenable at infinity, then the sequence in condition (3) is \emph{not} exact. It follows from \cite[Corollary~2.9]{MR3420532} and Proposition \ref{metriclattice-propertyA} that there exists a uniformly locally finite metric lattice $Z$ in $G$ without Yu's property $A$. Theorem \ref{ghost-A} and Proposition \ref{mainprop2} imply that $C_u^*(Z)$ contains a non-compact ghost $T$ and that $\Phi(T)$ is an obstruction for the exactness of the following sequence:
\begin{align*}
0\rightarrow C_0(G)\rtimes_{R,r}G\rightarrow C_b^{ru}(G)\rtimes_{R,r}G \rightarrow (C_b^{ru}(G)/C_0(G))\rtimes_{R,r} G \rightarrow 0.
\end{align*}
We claim that this sequence is exact if and only if the sequence in condition (3) is exact. Indeed, the inverse homeomorphism on $G$ induces a commutative diagram:
$$
\xymatrix{
0 \ar[r] & C_0(G)\rtimes_{R,r} G \ar[r] \ar[d]_{\cong} & C_b^{ru}(G))\rtimes_{R,r} G \ar[r] \ar[d]_{\cong} & (C_b^{ru}(G)/C_0(G))\rtimes_{R,r} G \ar[r] \ar[d]_{\cong} & 0 \\
0 \ar[r] & C_0(G)\rtimes_{L,r} G \ar[r]  &C_b^{lu}(G)\rtimes_{L,r} G \ar[r] & (C_b^{lu}(G)/C_0(G))\rtimes_{L,r} G \ar[r] & 0. \\
}
$$
The claim follows by an easy diagram chase.

$(1) \Rightarrow (4)$: This follows from Proposition \ref{amenat_infinity} and Theorem \ref{ADtop-dyn}.

$(4) \Rightarrow (3)$: Consider the following canonical diagram:
$$
\xymatrix{
0 \ar[r] & C_0(G)\rtimes_L G \ar[r] \ar[d]_{} & C(\beta^{lu}(G))\rtimes_L G \ar[r] \ar[d]_{} & C(\partial G)\rtimes_L G \ar[r] \ar[d]_{} & 0 \\
0 \ar[r] & C_0(G)\rtimes_{L,r} G \ar[r]  & C(\beta^{lu}(G))\rtimes_{L,r} G \ar[r] & C(\partial G)\rtimes_{L,r} G \ar[r] & 0. \\
}
$$
Note that the diagram commutes and the middle vertical arrow is surjective. Since the top sequence is exact and the right vertical arrow is injective, the bottom sequence is also exact by an easy diagram chase.

$(1) \Rightarrow (5)$: This follows from Proposition \ref{amenat_infinity} and Theorem \ref{ADtop-dyn}.

$(5) \Rightarrow (1)$: By assumption, the two $*$-isomorphic crossed products 
\[
C_b^{ru}(G)\rtimes_{R,r}G\cong C(\beta^{lu}(G))\rtimes_{L,r}G
\]
 are nuclear. Let $Z$ be a uniformly locally finite metric lattice in $G$. Then it follows from Proposition \ref{mainprop} (1) that the identity map on $C_u^*(Z)$ factors through the nuclear $C^*$-algebra $C_b^{ru}(G)\rtimes_{R,r}G$ by c.c.p. maps. Hence $C_u^*(Z)$ is also nuclear. It follows from Theorem \ref{A_Nuclear} and Proposition \ref{metriclattice-propertyA} that $G$ has property $A$. Therefore, $G$ is amenable at infinity by \cite[Corollary~2.9]{MR3420532}. 
\end{proof}
\section{Concluding Remarks}
We conclude by connecting our work to a remarkable result of Hiroki Sako:
\begin{thm}[{{\cite[Theorem 1.1]{Sako13}}}]
Let $(Z,d)$ be a uniformly locally finite metric space. Then the following conditions are equivalent:
\begin{enumerate}
 \item The metric space $(Z,d)$ has Yu's property $A$.
 \item The uniform Roe algebra $C^*_u(Z)$ is nuclear.
 \item The uniform Roe algebra $C^*_u(Z)$ is exact.
 \item The uniform Roe algebra $C^*_u(Z)$ is locally reflexive. 
\end{enumerate}
\end{thm}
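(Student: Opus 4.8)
Here, the equivalence $(1)\Leftrightarrow(2)$ is exactly Theorem~\ref{A_Nuclear}, so nothing new is needed for it. The implications $(2)\Rightarrow(3)\Rightarrow(4)$ are standard facts: the class of nuclear $C^*$-algebras is contained in the class of exact $C^*$-algebras, which in turn is contained in the class of locally reflexive $C^*$-algebras (see, e.g., \cite{MR2391387}). Since this already gives a chain $(1)\Rightarrow(2)\Rightarrow(3)\Rightarrow(4)$, the entire content of the theorem is the single implication $(4)\Rightarrow(1)$: local reflexivity of $C^*_u(Z)$ forces $Z$ to have Yu's property~$A$.

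To prove $(4)\Rightarrow(1)$ I would factor it through the \emph{operator norm localization property} ($\mathrm{ONL}$), namely the property that for all $R>0$ and $\varepsilon>0$ there is an $S>0$ such that every $a\in C^*_u(Z)$ of propagation at most $R$ admits a unit vector $\xi\in\ell^2(Z)$, supported in a subset of $Z$ of diameter at most $S$, with $\norm{a\xi}\geq(1-\varepsilon)\norm{a}$. It is enough to prove the two implications $(4)\Rightarrow\mathrm{ONL}$ and $\mathrm{ONL}\Rightarrow(1)$. The second I would deduce by contraposition from the ghost dichotomy of Roe and Willett: if $Z$ fails property~$A$, then by Theorem~\ref{ghost-A} the algebra $C^*_u(Z)$ contains a non-compact ghost $T$; non-compactness yields unit vectors $\xi_n$ along which $\norm{T\xi_n}$ stays bounded away from $0$, and, since the matrix coefficients of $T$ vanish at infinity, the $\xi_n$ may be taken supported near infinity and increasingly spread out. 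Approximating $T$ by finite-propagation operators then produces, at a fixed scale $R$, an operator of propagation at most $R$ whose unit-norm approximate eigenvectors cannot be localized to any fixed diameter, so $\mathrm{ONL}$ fails.

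The substantive step is $(4)\Rightarrow\mathrm{ONL}$, which I would again argue contrapositively. If $\mathrm{ONL}$ fails at some scale $R$, then for every $n$ there is $a_n\in C^*_u(Z)$ of propagation at most $R$ with $\norm{a_n}=1$ and $\norm{a_n\xi}\leq\tfrac12$ for every unit vector $\xi$ supported in a set of diameter at most $n$. These operators should be assembled, in a suitable reduced-product or bidual model, into a finite-dimensional operator system $E\subseteq C^*_u(Z)^{**}$. Local reflexivity then furnishes unital completely positive maps $\phi\colon E\to C^*_u(Z)$ converging to the inclusion $E\hookrightarrow C^*_u(Z)^{**}$ in the point-$\sigma(C^*_u(Z)^{**},C^*_u(Z)^{*})$ topology; tracking the propagation of $\phi$ on the part of $E$ built from $a_n$, and comparing with the non-localization of $a_n$, yields a contradiction once $n$ is large. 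Hence $\mathrm{ONL}$ holds, completing the cycle.

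The principal obstacle is precisely this comparison step: converting the abstract completely positive approximations supplied by local reflexivity into maps that respect the coarse (propagation) structure of $C^*_u(Z)$ closely enough to contradict the delocalization of the $a_n$. This requires using specific features of the uniform Roe algebra — the diagonal subalgebra $\ell^\infty(Z)\subseteq C^*_u(Z)$, its canonical conditional expectation, and the behaviour of propagation under passage to the bidual — rather than purely formal $C^*$-algebra theory. A less self-contained route is to quote the known equivalence of $\mathrm{ONL}$ with property~$A$ for uniformly locally finite metric spaces and then establish only the implication $(4)\Rightarrow\mathrm{ONL}$.
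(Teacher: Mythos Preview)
The paper does not prove this theorem at all: it is stated with attribution to Sako \cite{Sako13} and is used only as input to Corollary~\ref{cor:combining Sakos result}. The surrounding sentence in the paper --- that nuclearity implies exactness implies local reflexivity, and that local reflexivity passes to subalgebras --- explains why the corollary follows once one combines Sako's theorem with Proposition~\ref{mainprop}, but it is not a proof of the theorem itself. So there is no ``paper's own proof'' to compare your proposal against; you are sketching a proof of a cited external result.

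On the substance of your sketch: the easy implications and the reference to Theorem~\ref{A_Nuclear} are fine. For the hard direction $(4)\Rightarrow(1)$, your instinct to pass through a localization property and to build a finite-dimensional operator system in a reduced-product/bidual model is in the right spirit, and is indeed close to how Sako argues. However, your argument for $\mathrm{ONL}\Rightarrow(1)$ via ghosts is circular as written: the Roe--Willett dichotomy (Theorem~\ref{ghost-A}) is itself proved by first establishing that failure of property~$A$ implies failure of $\mathrm{ONL}$ and then constructing a non-compact ghost from that, so invoking it to deduce $\mathrm{ONL}\Rightarrow A$ is not independent. You should instead cite the direct equivalence of $\mathrm{ONL}$ with property~$A$ (Sako, and earlier partial results), or reproduce that argument. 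The step you flag as the ``principal obstacle'' --- turning the abstract c.p.\ approximants from local reflexivity into propagation-controlled maps --- is exactly where the real work in \cite{Sako13} lies, and your outline does not yet supply the mechanism (Sako uses a concrete ultraproduct of matrix algebras together with the diagonal embedding of $\ell^\infty(Z)$). If you want a self-contained account, that is the part to develop; otherwise, in the context of this paper a citation suffices.
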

For a general $C^*$-algebra, nuclearity implies exactness, and exactness implies local reflexivity. Moreover, a $C^*$-subalgebra of a locally reflexive $C^*$-algebra is also locally reflexive (we refer to \cite[Chapter 9]{MR2391387} for more details). Combining Sako's result with Proposition \ref{mainprop} and Proposition \ref{metriclattice-propertyA}, we obtain an analogous result on locally compact second countable groups:
\begin{cor} \label{cor:combining Sakos result}
Let $G$ be a locally compact second countable group. Then the following conditions are equivalent.
\begin{itemize}
\item[(1)] The group $G$ has property $A$.
\item[(2)] $C_b^{ru}(G)\rtimes_{R,r} G$ is nuclear.
\item[(3)] $C_b^{ru}(G)\rtimes_{R,r} G$ is exact.
\item[(4)] $C_b^{ru}(G)\rtimes_{R,r} G$ is locally reflexive.
\end{itemize}
\end{cor}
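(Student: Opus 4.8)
The plan is to run the cycle of implications $(1)\Rightarrow(2)\Rightarrow(3)\Rightarrow(4)\Rightarrow(1)$. The two middle implications are purely $C^*$-algebraic and use nothing special: a nuclear $C^*$-algebra is exact, and an exact $C^*$-algebra is locally reflexive \cite[Chapter 9]{MR2391387}. Thus everything reduces to $(1)\Rightarrow(2)$ and $(4)\Rightarrow(1)$. For both I would begin by fixing a proper left-invariant compatible metric $d$ on $G$; since $(G,d)$ has bounded geometry it contains a uniformly locally finite metric lattice $Z$, and Proposition \ref{mainprop} then provides a faithful $*$-homomorphism $\Phi\colon C^*_u(Z)\to C_b^{ru}(G)\rtimes_{R,r}G$ together with a c.c.p.\ map $E$ satisfying $E\circ\Phi=\mathrm{Id}_{C^*_u(Z)}$.

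For $(4)\Rightarrow(1)$: assuming $C_b^{ru}(G)\rtimes_{R,r}G$ is locally reflexive, its $C^*$-subalgebra $\Phi(C^*_u(Z))\cong C^*_u(Z)$ is locally reflexive as well, because local reflexivity passes to $C^*$-subalgebras. By \cite[Theorem 1.1]{Sako13} the space $(Z,d)$ then has Yu's property $A$, and Proposition \ref{metriclattice-propertyA} promotes this to property $A$ for $G$. For $(1)\Rightarrow(2)$: by \cite[Corollary 2.9]{MR3420532}, property $A$ of $G$ is equivalent to $G$ being amenable at infinity, and by the equivalence $(1)\Leftrightarrow(5)$ in Theorem \ref{thm:amenable at infinity and exactness} the latter is equivalent to nuclearity of $C(\beta^{lu}(G))\rtimes_{L,r}G$; one concludes with the $*$-isomorphism $C_b^{ru}(G)\rtimes_{R,r}G\cong C(\beta^{lu}(G))\rtimes_{L,r}G$ recorded in Section \ref{sec:topologically amenable actions}.

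The only step that is not formal bookkeeping is $(1)\Rightarrow(2)$, and it is worth being explicit about why it cannot be obtained from Proposition \ref{mainprop} and \cite[Theorem 1.1]{Sako13} alone: the retraction $E\circ\Phi=\mathrm{Id}$ transports nuclearity (resp.\ exactness, local reflexivity) \emph{from} $C_b^{ru}(G)\rtimes_{R,r}G$ \emph{down to} $C^*_u(Z)$, never in the reverse direction. Propagating nuclearity upward to the full crossed product is exactly the hard content of the main theorem --- it passes through the embedding of a non-compact ghost in Proposition \ref{mainprop2} obstructing exactness of the boundary sequence --- so $(1)\Rightarrow(2)$ genuinely invokes Theorem \ref{thm:amenable at infinity and exactness}.
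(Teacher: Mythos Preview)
Your argument is correct and follows the same route as the paper: the chain $(2)\Rightarrow(3)\Rightarrow(4)$ is general $C^*$-theory, $(4)\Rightarrow(1)$ goes via the $C^*$-subalgebra $\Phi(C^*_u(Z))$, Sako's theorem, and Proposition~\ref{metriclattice-propertyA}, and $(1)\Rightarrow(2)$ is read off from Theorem~\ref{thm:amenable at infinity and exactness} together with \cite[Corollary~2.9]{MR3420532} and the isomorphism $C_b^{ru}(G)\rtimes_{R,r}G\cong C(\beta^{lu}(G))\rtimes_{L,r}G$.

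One correction to your commentary, though: your final paragraph misidentifies where the difficulty lies. The implication $(1)\Rightarrow(2)$ uses only the direction $(1)\Rightarrow(5)$ of Theorem~\ref{thm:amenable at infinity and exactness}, and in the paper's proof that direction is obtained directly from Proposition~\ref{amenat_infinity} and Theorem~\ref{ADtop-dyn} (Anantharaman-Delaroche's classical results). It does \emph{not} use the ghost obstruction of Proposition~\ref{mainprop2}; that proposition enters only in the proof of $(3)\Rightarrow(1)$ of Theorem~\ref{thm:amenable at infinity and exactness}, which plays no role here. So $(1)\Rightarrow(2)$ is in fact the soft direction, and the corollary as a whole does not rely on the ghost machinery at all --- exactly as the paper's one-line justification (``Combining Sako's result with Proposition~\ref{mainprop} and Proposition~\ref{metriclattice-propertyA}'') suggests.
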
 

%
%
%

\bibliographystyle{plain}
\bibliography{kangbib}
\end{document}